\definecolor{dark-red}{rgb}{0.4,0.15,0.15}
\definecolor{dark-blue}{rgb}{0.15,0.15,0.4}
\definecolor{medium-blue}{rgb}{0,0,0.5}
\newcommand{\bigboxtimes}{
	\mathop{
		\vphantom{\bigotimes} 
		\mathchoice
		{\vcenter{\hbox{\resizebox{\widthof{$\displaystyle\bigotimes$}}{!}{$\boxtimes$}}}}
		{\vcenter{\hbox{\resizebox{\widthof{$\bigotimes$}}{!}{$\boxtimes$}}}}
		{\vcenter{\hbox{\resizebox{\widthof{$\scriptstyle\otimes$}}{!}{$\boxtimes$}}}}
		{\vcenter{\hbox{\resizebox{\widthof{$\scriptscriptstyle\otimes$}}{!}{$\boxtimes$}}}}
	}\displaylimits 
}
\newcommand{\A}{\mathbb{A}}
\newcommand{\C}{\mathbb{C}}
\newcommand{\dee}{\partial}
\newcommand{\e}{\varepsilon}
\newcommand{\F}{\mathbb{F}}
\newcommand{\HH}{\mathcal{H}}
\newcommand{\Mgp}{\mathrm{M}}
\newcommand{\Ngp}{\mathrm{N}}
\newcommand{\OO}{\mathcal{O}}
\newcommand{\Pb}{\mathbb{P}}
\newcommand{\Pgp}{\mathrm{P}}
\newcommand{\pp}{\mathfrak{p}}
\newcommand{\Q}{\mathbb{Q}}
\newcommand{\QQ}{\mathcal{Q}}
\newcommand{\R}{\mathbb{R}}
\newcommand{\WW}{\mathcal{W}}
\newcommand{\Z}{\mathbb{Z}}
\newcommand{\Zgp}{\mathrm{Z}}
\DeclareMathOperator{\ad}{ad}
\DeclareMathOperator{\blockdiag}{blockdiag}
\DeclareMathOperator{\diag}{diag}
\DeclareMathOperator{\End}{End}
\DeclareMathOperator{\GL}{GL}
\DeclareMathOperator{\Hom}{Hom}
\DeclareMathOperator{\Ind}{Ind}
\DeclareMathOperator{\Mat}{Mat}
\DeclareMathOperator{\Ogp}{O}
\DeclareMathOperator{\PGL}{PGL}
\DeclareMathOperator{\sgn}{sgn}
\DeclareMathOperator{\SL}{SL}
\DeclareMathOperator{\Tr}{Tr}
\DeclareMathOperator*{\vol}{vol}
\DeclareMathOperator{\Ugp}{U}
\numberwithin{equation}{section}
\newtheorem{theorem}[equation]{Theorem}
\newtheorem{corollary}[equation]{Corollary}
\newtheorem{lemma}[equation]{Lemma}
\newtheorem{proposition}[equation]{Proposition}
\newtheorem*{question}{Question}
\theoremstyle{remark}
\newtheorem{remark}[equation]{Remark}
\theoremstyle{definition}
\newtheorem{definition}[equation]{Definition}
\begin{document}

\title{The Newform $K$-Type and $p$-adic Spherical Harmonics}

\author{Peter Humphries}

\address{Department of Mathematics, University of Virginia, Charlottesville, VA 22904, USA}

\email{\href{mailto:pclhumphries@gmail.com}{pclhumphries@gmail.com}}

\urladdr{\href{https://sites.google.com/view/peterhumphries/}{https://sites.google.com/view/peterhumphries/}}

\subjclass[2020]{20G25 (primary); 11F70, 20G05, 22E50, 33C55 (secondary)}

\begin{abstract}
Let $K \coloneqq \GL_n(\OO)$ denote the maximal compact subgroup of $\GL_n(F)$, where $F$ is a nonarchimedean local field with ring of integers $\OO$. We study the decomposition of the space of locally constant functions on the unit sphere in $F^n$ into irreducible $K$-modules; for $F = \Q_p$, these are the $p$-adic analogues of spherical harmonics. As an application, we characterise the newform and conductor exponent of a generic irreducible admissible smooth representation of $\GL_n(F)$ in terms of distinguished $K$-types. Finally, we compare our results to analogous results in the archimedean setting.
\end{abstract}

\maketitle

\section{Introduction}

\subsection{The Space \texorpdfstring{$C^{\infty}(S^{n - 1})$}{C\9042\036{}(S\9040\177{}\9040\173{}\80\271)} as a \texorpdfstring{$K$}{K}-Module}

Let $F$ be a nonarchimedean local field with ring of integers $\OO$, maximal prime ideal $\pp$, and uniformiser $\varpi$, so that $\varpi \OO = \pp$ and $\OO/\pp \cong \F_q$ for some prime power $q$, where $\F_q$ is the finite field of order $q$. Thus either $F$ is a finite extension of the $p$-adic numbers $\Q_p$ for some prime $p$ or $F$ is the field of formal Laurent series $\F_q((t))$. We let $K_n \coloneqq \GL_n(\OO)$ denote the maximal compact subgroup of $\GL_n(F)$, which is unique up to conjugacy; when it is clear from context, we write $K$ in place of $K_n$. Throughout we assume that $n \geq 2$.

The group $K_n$ acts transitively on unit sphere $S^{n - 1}$ in $F^n$ via the group action $k \cdot x \coloneqq xk$ for $k \in K_n$ and $x \in S^{n - 1}$, where
\[S^{n - 1} \coloneqq \left\{x = (x_1,\ldots,x_n) \in F^n : \max\{|x_1|,\ldots,|x_n|\} = 1\right\};\]
here $|\cdot|$ denotes the absolute value on $F$ normalised such that $|\varpi| = q^{-1}$. The stabiliser subgroup of $K_n$ with respect to the point $e_n \coloneqq (0,\ldots,0,1) \in S^{n - 1}$ is
\begin{equation}
\label{eqn:Kn-11}
K_{n - 1,1} \coloneqq \left\{ \begin{pmatrix} a & b \\ 0 & 1 \end{pmatrix} \in K_n : a \in K_{n - 1}, \ b \in \Mat_{(n - 1) \times 1}(\OO) \right\},
\end{equation}
which has the structure of the outer semidirect product $K_{n - 1} \ltimes \OO^{n - 1}$. It follows that $S^{n - 1} \cong K_{n - 1,1} \backslash K_n$. Note that $K_{n - 1,1}$ is the maximal compact subgroup of the mirabolic subgroup
\begin{equation}
\label{eqn:mirabolic}
\Pgp_n(F) \coloneqq \left\{\begin{pmatrix} a & b \\ 0 & 1 \end{pmatrix} \in \GL_n(F) : a \in \GL_{n - 1}(F), \ b \in \Mat_{(n - 1) \times 1}(F)\right\}
\end{equation}
of $\GL_n(F)$.

Let $R$ denote the right regular representation of $K_n$ on the space of locally constant functions $C^{\infty}(S^{n - 1})$; thus $(R(k) \cdot f)(x) \coloneqq f(xk)$ for $f \in C^{\infty}(S^{n - 1})$, $k \in K_n$, and $x \in S^{n - 1}$. A natural question to ponder is the following.

\begin{question}
What is the decomposition of the right regular representation $R$ of $K_n$ on $C^{\infty}(S^{n - 1})$ into irreducible smooth representations of $K_n$? Equivalently, which irreducible smooth representations of $K_n$ have a nontrivial $K_{n - 1,1}$-fixed vector?
\end{question}

We study this problem in \hyperref[sect:spherical]{Section \ref*{sect:spherical}}. While the general classification of irreducible smooth representations of $K_n$ is unknown for $n \geq 3$, we show that the representations having a nontrivial $K_{n - 1,1}$-fixed vector can be explicitly described; in particular, a precise resolution of this question is given in \hyperref[thm:Hchimirred]{Theorem \ref*{thm:Hchimirred}}. The irreducible smooth representations of interest are indexed by pairs $(\chi,m)$ of characters $\chi$ of $\OO^{\times}$ and nonnegative integers $m \geq c(\chi)$, where $c(\chi)$ denotes the conductor exponent of $\chi$. The character $\chi$ is the central character of this representation, while $m$ is its level, namely the minimal nonnegative integer for which this representation factors through $\GL_n(\OO / \pp^m)$. We denote by $(\tau_{\chi,m},\HH_{\chi,m}(S^{n - 1}))$ the irreducible smooth representation associated to such a pair, where $\HH_{\chi,m}(S^{n - 1})$ is a finite-dimensional vector subspace of $C^{\infty}(S^{n - 1})$; this may be thought of as the nonarchimedean analogue of the vector space of spherical harmonics of degree $m$ on the unit sphere in $\R^n$. In particular, when $F = \Q_p$, we may think of $\HH_{\chi,m}(S^{n - 1})$ as being the space of $p$-adic spherical harmonics of character $\chi$ and level $m$.

\begin{remark}
The decomposition attained in \hyperref[thm:Hchimirred]{Theorem \ref*{thm:Hchimirred}} is not, in essence, fundamentally new. With $K_n = \GL_n(\OO)$ replaced by $\SL_n(\OO)$, this decomposition was previously achieved (with scant proofs) by Petrov \cite{Pet82} in a seemingly neglected paper; our method follows that sketched by Petrov and achieves the same decomposition when $n \geq 3$. A closely related result, via slightly different methods, is due to Hill \cite[Proposition 3.3]{Hil94}, who studies instead the decomposition of $C^{\infty}(\Zgp(\OO) K_{n - 1,1} \backslash K_n)$, where $\Zgp(\OO)$ denotes the centre of $K$; note that we may identify $\Zgp(\OO) K_{n - 1,1} \backslash K_n$ with $(n - 1)$-dimensional $F$-projective space $F\Pb^{n - 1}$.
\end{remark}

\subsection{Zonal Spherical Functions}

We next study the subspace of locally constant functions in $C^{\infty}(S^{n - 1})$ that are $K_{n - 1,1}$-invariant in \hyperref[sect:zonal]{Section \ref*{sect:zonal}}. Each irreducible subspace $\HH_{\chi,m}(S^{n - 1})$ of $C^{\infty}(S^{n - 1})$ has a one-dimensional subspace of $K_{n - 1,1}$-invariant functions, so that there exists a unique function $P_{\chi,m}^{\circ} \in \HH_{\chi,m}(S^{n - 1})$ satisfying $P_{\chi,m}^{\circ}(xk') = P_{\chi,m}^{\circ}(x)$ for all $x \in S^{n - 1}$ and $k' \in K_{n - 1,1}$ and $P_{\chi,m}^{\circ}(e_n) = 1$. We call $P_{\chi,m}^{\circ}$ the zonal spherical function on $S^{n - 1}$ of character $\chi$ and level $m$. These are the nonarchimedean analogues of zonal spherical harmonics (or ultraspherical polynomials).

We give an explicit formula for $P_{\chi,m}^{\circ}$ in \hyperref[prop:Pcirc]{Proposition \ref*{prop:Pcirc}}. Using this, we prove a nonarchimedean analogue of the addition formula for spherical harmonics in \hyperref[lem:addthm]{Lemma \ref*{lem:addthm}}. A consequence of this is \hyperref[cor:reproducing]{Corollary \ref*{cor:reproducing}}, which states that $(\dim \HH_{\chi,m}(S^{n - 1})) P_{\chi,m}^{\circ}$ is the reproducing kernel for $\HH_{\chi,m}(S^{n - 1})$; this implies that each element of $\HH_{\chi,m}(S^{n - 1})$ is \emph{equal} to a matrix coefficient of $\tau_{\chi,m}$.

\subsection{The Newform and the Conductor Exponent}

We apply this theory of $p$-adic spherical harmonics in \hyperref[sect:Ktype]{Section \ref*{sect:Ktype}} to previous work of Jacquet, Piatetski-Shapiro, and Shalika \cite{JP-SS81}. They prove the existence of a distinguished vector, the newform, associated to a given generic irreducible admissible representation $(\pi,V_{\pi})$ of $\GL_n(F)$ (or more generally to an induced representation of Langlands type). This vector is invariant under a certain congruence subgroup $K_1(\pp^m)$ of $K_n$ and is the minimal such nontrivial vector in the sense that there are no nontrivial vectors invariant under $K_1(\pp^{\ell})$ with $\ell < m$. This minimal value of $m$ is called the conductor exponent of $\pi$ and is denoted by $c(\pi)$.

We give alternative characterisations of the newform and conductor exponent in \hyperref[thm:newform]{Theorem \ref*{thm:newform}}. Using the work of Jacquet, Piatetski-Shapiro, and Shalika \cite{JP-SS81}, we show that the conductor exponent $c(\pi)$ of $\pi$ is the minimal nonnegative integer $m$ for which there exists a nontrivial $K_{n - 1,1}$-invariant $\tau_{\chi,m}$-isotypic vector in $V_{\pi}$ for some character $\chi$ of $\OO^{\times}$; necessarily, $\chi$ must then be equal to $\chi_{\pi}$, the restriction from $F^{\times}$ to $\OO^{\times}$ of the central character $\omega_{\pi}$ of $\pi$. We also show that the newform is precisely the nonzero vector, unique up to scalar multiplication, that is $K_{n - 1,1}$-invariant and $\tau_{\chi_{\pi},c(\pi)}$-isotypic; for this reason, we name $\tau_{\chi_{\pi},c(\pi)}$ the newform $K$-type. Our methods also allow us to describe precisely in \hyperref[prop:newform]{Proposition \ref*{prop:newform}} the multiplicity with which a representation $\tau_{\chi,m}$ occurs in the $K$-type decomposition of $\pi$.

These different characterisations open up new avenues of approach to studying properties of the newform. We give a simple example of one such property in \hyperref[cor:matrixcoeff]{Corollary \ref*{cor:matrixcoeff}}, where we show that the matrix coefficient associated to the newform may be explicitly described in terms of the zonal spherical function $P_{\chi_{\pi},c(\pi)}^{\circ} \in \HH_{\chi_{\pi},c(\pi)}(S^{n - 1})$.

\subsection{The Archimedean Theory}

Finally, in \hyperref[sect:archimedean]{Section \ref*{sect:archimedean}}, we compare our results to analogous results in the archimedean setting. The archimedean analogue of the maximal compact subgroup $K_n = \GL_n(\OO)$ of $\GL_n(F)$ is the orthogonal group $\Ogp(n)$ if $F = \R$ and the unitary group $\Ugp(n)$ if $F = \C$. The decomposition of $C^{\infty}(S^{n - 1})$ into irreducible $K_n$-modules in these settings is well-known: it is the classical theory of spherical harmonics. These are the restriction to the unit sphere of homogeneous harmonic polynomials of a given degree if $F = \R$ or of a given bidegree if $F = \C$. In both cases, there exist analogues of the the zonal spherical functions $P_{\chi,m}^{\circ}$.

The archimedean analogue of the theory of newforms and conductor exponents of a generic irreducible admissible smooth representation of $\GL_n(F)$ was recently developed by the author \cite{Hum20}. Over archimedean local fields, the approach of Jacquet, Piatetski-Shapiro, and Shalika \cite{JP-SS81} via congruence subgroups is no longer applicable, and instead the development of the theory of newforms is via distinguished $K_n$-types. Thus our alternative characterisation in the nonarchimedean setting proven in \hyperref[thm:newform]{Theorem \ref*{thm:newform}} serves to unify these two different settings.

\section{\texorpdfstring{$p$}{p}-adic Spherical Harmonics}
\label{sect:spherical}

Our goal is to decompose the right regular representation of $K$ on $C^{\infty}(S^{n - 1})$ into irreducible smooth representations. Here $C^{\infty}(S^{n - 1})$ is the space of locally constant functions $f : S^{n - 1} \to \C$, namely functions that are right-invariant by a compact open subgroup of $K$. We define a $K$-invariant inner product on $C^{\infty}(S^{n - 1}) \ni f_1,f_2$ via
\begin{equation}
\label{eqn:innerproduct}
\langle f_1,f_2\rangle \coloneqq \int_{K} f_1(e_n k) \overline{f_2}(e_n k) \, dk,
\end{equation}
where $dk$ denotes the Haar probability measure on the compact group $K$. We also recall that given a closed subgroup $H$ of a compact profinite group $G$ and a one-dimensional representation $\psi$ of $H$, the induced representation $\Ind_H^G \psi$ is the vector space of locally constant functions $\phi : G \to \C$ that satisfy $\phi(hg) = \psi(h) \phi(g)$ for all $g \in G$ and $h \in H$, upon which $G$ acts via right translations.

Our first observation is that the space $C^{\infty}(S^{n - 1})$ may be identified with the induced representation of the trivial representation of $K_{n - 1,1}$.

\begin{lemma}
\label{lem:IndKn-1,1}
As $K$-modules, the space $C^{\infty}(S^{n - 1})$ is isomorphic to $\Ind_{K_{n - 1,1}}^{K} 1$.
\end{lemma}

\begin{proof}
Given $f \in C^{\infty}(S^{n - 1})$, define $\phi(k) \coloneqq f(e_n k)$; then clearly $\phi \in \Ind_{K_{n - 1,1}}^{K} 1$. Conversely, given $\phi \in \Ind_{K_{n - 1,1}}^{K} 1$, $\phi(k)$ is dependent only on $e_n k$. Since for each $x \in S^{n - 1}$, there exists some $k \in K$ for which $x = e_n k$, the function $f(x) \coloneqq \phi(k)$ is well-defined, which gives us an element of $C^{\infty}(S^{n - 1})$.
\end{proof}

\subsection{Reduction to \texorpdfstring{$K_1(\pp^m)$}{K\9040\201(pm)}}

We next study subspaces of $C^{\infty}(S^{n - 1})$ consisting of locally constant functions invariant under certain congruence subgroups. For each nonnegative integer $m$, let $K(\pp^m)$ denote the principal congruence subgroup of level $m$ of $K$, namely
\[K(\pp^m) \coloneqq \left\{k \in K : k - 1_n \in \Mat_{n \times n}(\pp^m)\right\},\]
where $1_n$ denotes the $n \times n$ identity matrix. This is a normal subgroup of $K$. These subgroups allow us to construct a filtration of $C^{\infty}(S^{n - 1})$ via the subspaces
\[C^{\infty}(S^{n - 1})^{K(\pp^m)} \coloneqq \left\{f \in C^{\infty}(S^{n - 1}) : f(xk) = f(x) \text{ for all $x \in S^{n - 1}$ and $k \in K(\pp^m)$}\right\}\]
of $K(\pp^m)$-invariant locally constant functions on $S^{n - 1}$. We observe that $C^{\infty}(S^{n - 1})^{K(\pp^m)}$ is contained in $C^{\infty}(S^{n - 1})^{K(\pp^{\ell})}$ for all nonnegative integers $m \leq \ell$. Furthermore, the union $\bigcup_{m = 0}^{\infty} C^{\infty}(S^{n - 1})^{K(\pp^m)}$ is equal to $C^{\infty}(S^{n - 1})$.

We also define the congruence subgroup $K_1(\pp^m)$ of $K$ by
\[K_1(\pp^m) \coloneqq \left\{ \begin{pmatrix} a & b \\ c & d \end{pmatrix} \in K : c \in \Mat_{1 \times (n - 1)}(\pp^m), \ d - 1 \in \pp^m \right\}\]
for each nonnegative integer $m$, so that $K_1(\pp^0) = K$, while if $m \geq 1$, $\begin{psmallmatrix} a & b \\ c & d \end{psmallmatrix} \in K_1(\pp^m)$ implies that $a \in K_{n - 1}$ and $d \in \OO^{\times}$; we make note of the fact that $K_1(\pp^m)$ contains $K_1(\pp^{\ell})$ and $K(\pp^{\ell})$ as subgroups whenever $\ell \geq m$.

\begin{lemma}
\label{lem:IndK1}
As $K$-modules, $C^{\infty}(S^{n - 1})^{K(\pp^m)}$ is isomorphic to $\Ind_{K_1(\pp^m)}^{K} 1$.
\end{lemma}

\begin{proof}
From \hyperref[lem:IndKn-1,1]{Lemma \ref*{lem:IndKn-1,1}}, $C^{\infty}(S^{n - 1})^{K(\pp^m)}$ is isomorphic as a $K$-module to the space of locally constant functions $\phi : K \to \C$ that satisfy $\phi(k' k k'') = \phi(k)$ for all $k' \in K_{n - 1,1}$, $k \in K$, and $k'' \in K(\pp^m)$. Since $K(\pp^m)$ is a normal subgroup of $K$, this is equal to the space of locally constant functions $\phi : K \to \C$ that satisfy $\phi(k' k'' k) = \phi(k)$ for all $k' \in K_{n - 1,1}$, $k'' \in K(\pp^m)$, and $k \in K$. The result then follows from the fact that $K_{n - 1,1} K(\pp^m) = K_1(\pp^m)$, which is clearly true if $m = 0$; for $m \geq 1$, it is immediate that $K_{n - 1,1} K(\pp^m) \subseteq K_1(\pp^m)$, while the fact that $K_{n - 1,1} K(\pp^m) \supseteq K_1(\pp^m)$ can be seen directly, since for $\begin{psmallmatrix} a & b \\ c & d \end{psmallmatrix} \in K_1(\pp^m)$, we have that
\[\begin{pmatrix} a & b \\ c & d \end{pmatrix} = \begin{pmatrix} a & bd^{-1} \\ 0 & 1 \end{pmatrix} \begin{pmatrix} 1_{n - 1} - a^{-1} bd^{-1}c & 0 \\ c & d \end{pmatrix}.\qedhere\]
\end{proof}

\begin{corollary}
We have that
\begin{equation}
\label{eqn:Scmdim}
\dim C^{\infty}(S^{n - 1})^{K(\pp^m)} = \begin{dcases*}
1 & if $m = 0$,	\\
q^{(m - 1)n} (q^n - 1) & if $m \geq 1$.
\end{dcases*}
\end{equation}
\end{corollary}

\begin{proof}
We have that $\dim C^{\infty}(S^{n - 1})^{K(\pp^m)} = \dim \Ind_{K_1(\pp^m)}^{K} 1$ from \hyperref[lem:IndK1]{Lemma \ref*{lem:IndK1}}. Since the trivial representation is one-dimensional, the dimension of the monomial representation $\Ind_{K_1(\pp^m)}^{K} 1$ is simply the index $[K : K_1(\pp^m)]$ of the subgroup $K_1(\pp^m)$ in $K$. We claim that this is precisely the right-hand side of \eqref{eqn:Scmdim}. Indeed, we have that
\[\left[K : K_1(\pp^m)\right] = \left[K/K(\pp^m) : K_1(\pp^m)/K(\pp^m)\right] = \left[\GL_n(\OO/\pp^m) : \Pgp_n(\OO/\pp^m)\right],\]
where $\Pgp_n$ denotes the mirabolic subgroup \eqref{eqn:mirabolic}. We may identify $\Pgp_n(\OO/\pp^m) \backslash \GL_n(\OO/\pp^m)$ with
\begin{equation}
\label{eqn:barSn-1}
\overline{S}^{n - 1} \coloneqq \left\{\overline{x} = (\overline{x_1},\ldots,\overline{x_n}) \in (\OO/\pp^m)^n : \max\{|\overline{x_1}|,\ldots,|\overline{x_n}|\} = 1\right\},
\end{equation}
where $\overline{x_j}$ denotes a coset of $\pp^m$ in $\OO$ of the form $x_j + \pp^m$ with a representative $x_j \in \OO$, and $|\overline{x_j}|$ is equal to $|x_j|$ if the representative $x_j$ is not in $\pp^m$ and is equal to $q^{-m}$ otherwise; it is not hard to see that $|\overline{x_j}|$ is independent of the choice of representative $x_j$ and hence is well-defined. Now a straightforward induction on $m$ argument shows that the set $\overline{S}^{n - 1}$ has cardinality $q^{(m - 1)n} (q^n - 1)$.
\end{proof}

\subsection{Reduction to \texorpdfstring{$K_0(\pp^m)$}{K\9040\200(pm)}}

Next, we decompose $C^{\infty}(S^{n - 1})^{K(\pp^m)}$ further into subspaces of locally constant functions with a prescribed central character, noting that the centre of $K$ is $\Zgp(\OO) \coloneqq \{z(a) \coloneqq \diag(a,\ldots,a) \in K : a \in \OO^{\times}\}$. Given a character $\chi$ of $\OO^{\times}$, we define the subspace of $C^{\infty}(S^{n - 1})$ consisting of $K(\pp^m)$-invariant (and hence locally constant) functions on $S^{n - 1}$ with central character $\chi$ by
\[C^{\infty}(S^{n - 1})_{\chi}^{K(\pp^m)} \coloneqq \left\{f \in C^{\infty}(S^{n - 1})^{K(\pp^m)} : f(ax) = \chi(a) f(x) \text{ for all $a \in \OO^{\times}$ and $x \in S^{n - 1}$}\right\}.\]
Clearly $C^{\infty}(S^{n - 1})_{\chi}^{K(\pp^m)}$ is trivial if $m < c(\chi)$, where $c(\chi)$ denotes the conductor exponent of $\chi$, namely the least nonnegative integer $m$ for which $\chi$ is trivial on $1 + \pp^m$. Moreover, $C^{\infty}(S^{n - 1})_{\chi}^{K(\pp^m)}$ is contained in $C^{\infty}(S^{n - 1})_{\chi}^{K(\pp^{\ell})}$ for all $\ell \geq m \geq c(\chi)$ and any two subspaces $C^{\infty}(S^{n - 1})_{\chi_1}^{K(\pp^{m_1})}$ and $C^{\infty}(S^{n - 1})_{\chi_2}^{K(\pp^{m_2})}$ are mutually orthogonal with respect to the inner product \eqref{eqn:innerproduct} whenever $\chi_1 \neq \chi_2$.

For each nonnegative integer $m$, the congruence subgroup $K_0(\pp^m)$ of $K$ is defined by
\[K_0(\pp^m) \coloneqq \left\{ \begin{pmatrix} a & b \\ c & d \end{pmatrix} \in K : c \in \Mat_{1 \times (n - 1)}(\pp^m)\right\},\]
so that $K_0(\pp^0) = K$, while if $m \geq 1$, $\begin{psmallmatrix} a & b \\ c & d \end{psmallmatrix} \in K_0(\pp^m)$ implies that $a \in K_{n - 1}$ and $d \in \OO^{\times}$. We observe that $K_0(\pp^m)$ contains $K_0(\pp^{\ell})$, $K_1(\pp^{\ell})$, and $K(\pp^{\ell})$ as subgroups whenever $\ell \geq m$.

Let $\widehat{\OO^{\times}}$ denote the set of (continuous) characters $\chi : \OO^{\times} \to \C^{\times}$; necessarily the image of such a character is in the unit circle $\{z \in \C^{\times} : |z| = 1\}$. For $\chi \in \widehat{\OO^{\times}}$, let $\psi_{\chi}$ be the character of $K_0(\pp^{c(\chi)}) \ni k_0 = \begin{psmallmatrix} a & b \\ c & d \end{psmallmatrix}$ given by $\psi_{\chi}(k_0) \coloneqq \chi(d)$, which is a one-dimensional representation of $K_0(\pp^{c(\chi)})$; by restriction, this is also a one-dimensional representation of $K_0(\pp^m)$ whenever $m \geq c(\chi)$.

\begin{lemma}
\label{lem:IndK0}
For $\chi \in \widehat{\OO^{\times}}$ and $m \geq c(\chi)$, $C^{\infty}(S^{n - 1})_{\chi}^{K(\pp^m)}$ is isomorphic as a $K$-module to $\Ind_{K_0(\pp^m)}^{K} \psi_{\chi}$.
\end{lemma}

\begin{proof}
From \hyperref[lem:IndK1]{Lemma \ref*{lem:IndK1}}, $C^{\infty}(S^{n - 1})_{\chi}^{K(\pp^m)}$ is isomorphic as a $K$-module to the space of locally constant functions $\phi : K \to \C$ that satisfy $\phi(z(a) k_1 k) = \chi(a) \phi(k)$ for all $z(a) \in \Zgp(\OO)$, $k_1 \in K_1(\pp^m)$, and $k \in K$. It remains to note that $\Zgp(\OO) K_1(\pp^m) = K_0(\pp^m)$.
\end{proof}

\begin{corollary}
We have that
\begin{equation}
\label{eqn:Scchimdim}
\dim C^{\infty}(S^{n - 1})_{\chi}^{K(\pp^m)} = \begin{dcases*}
1 & if $m = c(\chi) = 0$,	\\
q^{(m - 1)(n - 1)} \frac{q^n - 1}{q - 1} & if $m \geq \max\{c(\chi),1\}$,	\\
0 & otherwise.
\end{dcases*}
\end{equation}
\end{corollary}

\begin{proof}
We have that $\dim C^{\infty}(S^{n - 1})_{\chi}^{K(\pp^m)} = \dim \Ind_{K_0(\pp^m)}^{K} \psi_{\chi}$ from \hyperref[lem:IndK0]{Lemma \ref*{lem:IndK0}}. Since $\psi_{\chi}$ is one-dimensional, the dimension of the monomial representation $\Ind_{K_0(\pp^m)}^{K} \psi_{\chi}$ is simply the index of $K_0(\pp^m)$ in $K$, so that
\[\dim \Ind_{K_0(\pp^m)}^{K} \psi_{\chi} = \begin{dcases*}
\left[K : K_0(\pp^m)\right] & if $m \geq c(\chi)$,	\\
0 & otherwise.
\end{dcases*}\]
This precisely the right-hand side of \eqref{eqn:Scchimdim}, since $K_1(\pp^m)$ is a normal subgroup of $K_0(\pp^m)$ with quotient isomorphic to the finite abelian group $\OO^{\times} / (1 + \pp^m)$, which has cardinality $q^{m - 1} (q - 1)$, together with the fact that $[K:K_1(\pp^m)] = [K:K_0(\pp^m)] [K_0(\pp^m) : K_1(\pp^m)]$.
\end{proof}

\begin{corollary}
We have the orthogonal decomposition
\begin{equation}
\label{eqn:CKmdecomp}
C^{\infty}(S^{n - 1})^{K(\pp^m)} = \bigoplus_{\substack{\chi \in \widehat{\OO^{\times}} \\ 0 \leq c(\chi) \leq m}} C^{\infty}(S^{n - 1})_{\chi}^{K(\pp^m)}.
\end{equation}
\end{corollary}

\begin{proof}
Since $K_1(\pp^m)$ is a normal subgroup of $K_0(\pp^m)$ with quotient isomorphic to $\OO^{\times} / (1 + \pp^m)$, we have that
\begin{equation}
\label{eqn:K0toK1ind}
\Ind_{K_1(\pp^m)}^{K_0(\pp^m)} 1 = \bigoplus_{\substack{\chi \in \widehat{\OO^{\times}} \\ 0 \leq c(\chi) \leq m}} \psi_{\chi}
\end{equation}
and so by inducing in stages,
\[\Ind_{K_1(\pp^m)}^{K} 1 = \bigoplus_{\substack{\chi \in \widehat{\OO^{\times}} \\ 0 \leq c(\chi) \leq m}} \Ind_{K_0(\pp^m)}^{K} \psi_{\chi}.\]
Together with \hyperref[lem:IndK0]{Lemma \ref*{lem:IndK0}}, this gives the orthogonal decomposition \eqref{eqn:CKmdecomp}.
\end{proof}

\subsection{Irreducible Representations}

The spaces $C^{\infty}(S^{n - 1})_{\chi}^{K(\pp^m)}$ are not irreducible if $m > c(\chi)$ since $C^{\infty}(S^{n - 1})_{\chi}^{K(\pp^m)}$ contains $C^{\infty}(S^{n - 1})_{\chi}^{K(\pp^{\ell})}$ for all $\ell \in \{c(\chi), \ldots, m - 1\}$. Thus we are led to study the orthogonal complement $C^{\infty}(S^{n - 1})_{\chi}^{K(\pp^m)} \ominus C^{\infty}(S^{n - 1})_{\chi}^{K(\pp^{m - 1})}$ of $C^{\infty}(S^{n - 1})_{\chi}^{K(\pp^{m - 1})}$ in $C^{\infty}(S^{n - 1})_{\chi}^{K(\pp^m)}$ with respect to the inner product \eqref{eqn:innerproduct}. For $m \geq c(\chi)$, define
\begin{equation}
\label{eqn:H}
\HH_{\chi,m}(S^{n - 1}) \coloneqq \begin{dcases*}
C^{\infty}(S^{n - 1})_{\chi}^{K(\pp^{c(\chi)})} & if $m = c(\chi)$,	\\
C^{\infty}(S^{n - 1})_{\chi}^{K(\pp^m)} \ominus C^{\infty}(S^{n - 1})_{\chi}^{K(\pp^{m - 1})} & if $m > c(\chi)$,
\end{dcases*}
\end{equation}
As $K$-modules,
\begin{equation}
\label{eqn:HtoInd}
\HH_{\chi,m}(S^{n - 1}) \cong \begin{dcases*}
\Ind_{K_0(\pp^{c(\chi)})}^{K} \psi_{\chi} & if $m = c(\chi)$,	\\
\Ind_{K_0(\pp^m)}^{K} \psi_{\chi} \ominus \Ind_{K_0(\pp^{m - 1})}^{K} \psi_{\chi} & if $m > c(\chi)$.
\end{dcases*}
\end{equation}

\begin{lemma}
\label{lem:dimH}
We have that
\[\dim \HH_{\chi,m}(S^{n - 1}) = \begin{dcases*}
1 & if $c(\chi) = m = 0$,	\\
q \frac{q^{n - 1} - 1}{q - 1} & if $c(\chi) = 0$ and $m = 1$,	\\
q^{(c(\chi) - 1)(n - 1)} \frac{q^n - 1}{q - 1} & if $c(\chi) = m \geq 1$,	\\
q^{(m - 2)(n - 1)} \frac{(q^n - 1)(q^{n - 1} - 1)}{q - 1} & if $m > \max\{c(\chi),1\}$.
\end{dcases*}\]
\end{lemma}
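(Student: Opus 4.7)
The plan is an immediate case-by-case computation from the definition \eqref{eq:H} together with the dimension formula \eqref{eq:Scchimdim} established in the previous corollary. Since the orthogonal complement in \eqref{eq:H} is taken inside a finite-dimensional inner product space, the dimensions simply subtract, so there is no subtlety beyond arithmetic.

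I would organise the argument according to the four cases in the statement. When $c(\chi) = m = 0$, the definition gives $\HH_{\chi,0}(S^{n-1}) = C^{\infty}(S^{n-1})_{\chi}^{K(\pp^0)}$, whose dimension is $1$ by the first line of \eqref{eq:Scchimdim}. When $c(\chi) = m \geq 1$, the definition gives $\HH_{\chi,m}(S^{n-1}) = C^{\infty}(S^{n-1})_{\chi}^{K(\pp^m)}$, whose dimension is $q^{(m-1)(n-1)} \frac{q^n - 1}{q - 1}$ by the second line of \eqref{eq:Scchimdim}.

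The remaining two cases both use the subtraction
\[
\dim \HH_{\chi,m}(S^{n-1}) = \dim C^{\infty}(S^{n-1})_{\chi}^{K(\pp^m)} - \dim C^{\infty}(S^{n-1})_{\chi}^{K(\pp^{m-1})}.
\]
For $c(\chi) = 0$ and $m = 1$, the first term equals $\frac{q^n - 1}{q - 1}$ and the second equals $1$; combining and factoring gives $\frac{q^n - q}{q - 1} = q \cdot \frac{q^{n-1} - 1}{q - 1}$. For $m > \max\{c(\chi), 1\}$, both terms are governed by the second line of \eqref{eq:Scchimdim}, so the difference is
\[
q^{(m-1)(n-1)} \frac{q^n - 1}{q - 1} - q^{(m-2)(n-1)} \frac{q^n - 1}{q - 1} = q^{(m-2)(n-1)} \frac{(q^n - 1)(q^{n-1} - 1)}{q - 1},
\]
as claimed.

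There is no real obstacle here; the only thing to be slightly careful about is the case split at $c(\chi) = 0$, $m = 1$, which is distinguished because the predecessor dimension $\dim C^{\infty}(S^{n-1})_{\chi}^{K(\pp^0)} = 1$ is computed from the first rather than the second line of \eqref{eq:Scchimdim}. Once that case is separated out, the remaining arithmetic is a single factorisation.
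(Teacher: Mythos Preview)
Your proof is correct and is precisely the approach taken in the paper, which simply states that the result follows immediately from \eqref{eq:Scchimdim} and \eqref{eq:H}. You have just written out the straightforward arithmetic that the paper leaves implicit.
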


\begin{proof}
This follows immediately from \eqref{eqn:Scchimdim} and \eqref{eqn:H}.
\end{proof}

The spaces $\HH_{\chi,m}(S^{n - 1})$ are $K$-invariant; furthermore, any two subspaces $\HH_{\chi_1,m_1}(S^{n - 1})$ and $\HH_{\chi_1,m_1}(S^{n - 1})$ are mutually orthogonal whenever either $\chi_1 \neq \chi_2$ or $m_1 \neq m_2$. We claim that these subspaces are irreducible, which thereby completes the decomposition of $C^{\infty}(S^{n - 1})$ into irreducible $K$-modules.

\begin{theorem}
\label{thm:Hchimirred}
For each character $\chi \in \widehat{\OO^{\times}}$ and for each integer $m \geq c(\chi)$, the $K$-module $\HH_{\chi,m}(S^{n - 1})$ is irreducible, and we have the orthogonal decompositions
\begin{align}
\label{eqn:CchiKmdecomp}
C^{\infty}(S^{n - 1})_{\chi}^{K(\pp^m)} & = \bigoplus_{\ell = c(\chi)}^{m} \HH_{\chi,\ell}(S^{n - 1}),	\\
\label{eqn:Cdecomp}
C^{\infty}(S^{n - 1}) & = \bigoplus_{m = 0}^{\infty} \bigoplus_{\substack{\chi \in \widehat{\OO^{\times}} \\ 0 \leq c(\chi) \leq m}} \HH_{\chi,m}(S^{n - 1}).
\end{align}
\end{theorem}

For $n = 2$, Casselman \cite[Proposition 1]{Cas73b} studies the decomposition into irreducible representations of $\Ind_{K_0(\pp^m)}^{K} \psi_{\chi} \cong C^{\infty}(S^{n - 1})_{\chi}^{K(\pp^m)}$; see also \cite[Section 3.3]{BP17} for the case $\chi = 1$.

Let $\widehat{K}$ denote the set of equivalence classes of irreducible smooth representations of $K$, and write $\tau_{\chi,m}$ for the representation in $\widehat{K}$ given by right translations on the finite-dimensional vector space $\HH_{\chi,m}(S^{n - 1})$. We have now classified precisely which representations in $\widehat{K}$ have a $K_{n - 1,1}$-fixed vector.

\begin{corollary}
\label{cor:onedim}
For every irreducible smooth representation $\tau \in \widehat{K}$,
\[\dim \Hom_{K_{n - 1,1}}\left(1,\tau|_{K_{n - 1,1}}\right) = \begin{dcases*}
1 & if $\tau = \tau_{\chi,m}$ for some $\chi \in \widehat{\OO^{\times}}$ and $m \geq c(\chi)$,	\\
0 & otherwise.
\end{dcases*}\]
In particular, the subspace
\[\HH_{\chi,m}(S^{n - 1})^{K_{n - 1,1}} \coloneqq \left\{f \in \HH_{\chi,m}(S^{n - 1}) : f(xk') = f(x) \text{ for all $x \in S^{n - 1}$ and $k' \in K_{n - 1,1}$}\right\}\]
of $K_{n - 1,1}$-invariant functions in $\HH_{\chi,m}(S^{n - 1})$ is one-dimensional.
\end{corollary}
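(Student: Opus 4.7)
The plan is to combine Frobenius reciprocity with the decomposition established in Theorem \ref{thm:Hchimirred}. Since $K$ is compact, any irreducible smooth representation $\tau$ is finite-dimensional, and $K_{n-1,1}$ is a closed subgroup, so Frobenius reciprocity gives
\begin{equation*}
\Hom_{K_{n-1,1}}\left(1, \tau|_{K_{n-1,1}}\right) \cong \Hom_K\left(\Ind_{K_{n-1,1}}^K 1, \tau\right).
\end{equation*}
By Lemma \ref{lem:IndKn-1,1}, $\Ind_{K_{n-1,1}}^K 1$ is isomorphic as a $K$-module to $C^{\infty}(S^{n-1})$, which by Theorem \ref{thm:Hchimirred} decomposes as the orthogonal direct sum $\bigoplus_{\chi,m} \HH_{\chi,m}(S^{n-1})$, with each summand the irreducible $K$-module $\tau_{\chi,m}$. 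Applying $\Hom_K(-,\tau)$ converts this into a direct product, and Schur's lemma ensures that each factor $\Hom_K(\tau_{\chi,m},\tau)$ is either one-dimensional or zero.

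Next I would show that $\tau$ is isomorphic to $\tau_{\chi,m}$ for at most one pair $(\chi,m)$. The central character of $\tau_{\chi,m}$ equals $\chi$, since $(R(z(a))f)(x) = f(ax) = \chi(a)f(x)$ for every $f \in \HH_{\chi,m}(S^{n-1}) \subset C^{\infty}(S^{n-1})_{\chi}^{K(\pp^m)}$; thus $\chi$ is determined by $\tau$. Given $\chi$, the integer $m$ is determined as the smallest nonnegative integer for which $K(\pp^m)$ acts trivially on $\tau$: by the definition \eqref{eq:H}, the action of $K$ on $\HH_{\chi,m}(S^{n-1})$ factors through $K/K(\pp^m)$, whereas $K(\pp^{m-1})$ cannot act trivially on $\tau_{\chi,m}$ when $m \geq 1$, either because of the orthogonal-complement definition when $m > c(\chi)$ or because $\chi$ is nontrivial on $1 + \pp^{m-1}$ when $m = c(\chi)$. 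Consequently the direct product above has at most one nontrivial factor, and
\begin{equation*}
\dim \Hom_{K_{n-1,1}}\left(1, \tau|_{K_{n-1,1}}\right) = \begin{dcases*} 1 & if $\tau \cong \tau_{\chi,m}$ for some $\chi \in \widehat{\OO^{\times}}$ and $m \geq c(\chi)$, \\ 0 & otherwise. \end{dcases*}
\end{equation*}

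The second assertion is then immediate: by construction $\HH_{\chi,m}(S^{n-1})^{K_{n-1,1}}$ is the space of $K_{n-1,1}$-fixed vectors in $\tau_{\chi,m}$, whose dimension equals the $\Hom$-space dimension just computed and is therefore $1$. I do not foresee a substantial obstacle; the only delicate step is the uniqueness verification, since without it the direct product over all pairs $(\chi,m)$ could \emph{a priori} contribute more than one to the dimension.
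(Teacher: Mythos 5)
Your proof is correct and takes essentially the same route as the paper: Frobenius reciprocity combined with the decomposition of $C^{\infty}(S^{n-1})$ from Theorem \ref{thm:Hchimirred} and the observation that the $\tau_{\chi,m}$ are pairwise non-isomorphic, so the decomposition is multiplicity-free. The only (harmless) difference is that you pin down the pair $(\chi,m)$ via the central character and the level (the minimal $m$ with $K(\pp^m)$ acting trivially), whereas the paper distinguishes the $\tau_{\chi,m}$ by their central characters and dimensions.
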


\begin{proof}
We observe that $\tau_{\chi_1,m_1}$ is isomorphic to $\tau_{\chi_2,m_2}$ if and only if $\chi_1 = \chi_2$ and $m_1 = m_2$ by examining the dimensions and central characters of these representations. It follows that $C^{\infty}(S^{n - 1})$ is multiplicity-free, so that $(K_n, K_{n - 1,1})$ is a Gelfand pair. The result then follows via \eqref{eqn:Cdecomp} and Frobenius reciprocity.
\end{proof}

The proof of \hyperref[thm:Hchimirred]{Theorem \ref*{thm:Hchimirred}} requires the following lemma.

\begin{lemma}
\label{lem:doublecoset}
For each nonnegative integer $m$, we have the double coset decomposition
\[K = \bigsqcup_{\ell = 0}^{m} K_0(\pp^m) \begin{pmatrix} 1_{n - 1} & 0 \\ \varpi^{\ell} e_{n - 1} & 1 \end{pmatrix} K_0(\pp^m).\]
\end{lemma}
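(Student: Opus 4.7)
The plan is to introduce a double coset invariant that distinguishes the claimed representatives. For $k \in K$ with bottom row $(r_1, \ldots, r_n)$, I set
\[\ell(k) := \min\{m, v_{\pp}(r_1), \ldots, v_{\pp}(r_{n-1})\},\]
where $v_{\pp}$ denotes the $\pp$-adic valuation on $\OO$. A direct check gives $\ell(g_{\ell}) = \ell$ for each $\ell \in \{0, 1, \ldots, m\}$, where $g_{\ell}$ denotes the claimed coset representative; so once I verify that $\ell(\cdot)$ is constant on each double coset $K_0(\pp^m) k K_0(\pp^m)$, the matrices $g_0, g_1, \ldots, g_m$ must lie in pairwise distinct double cosets.

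For invariance under left multiplication by $\begin{psmallmatrix} a' & b' \\ c' & d' \end{psmallmatrix} \in K_0(\pp^m)$, the bottom row of the product equals $c' \cdot [\text{top } n-1 \text{ rows of } k] + d' (r_1, \ldots, r_n)$, which reduces modulo $\pp^m$ to $d'(r_1, \ldots, r_n)$ since $c' \in \pp^m$; as $d' \in \OO^{\times}$, this preserves $\ell$. For right multiplication by $\begin{psmallmatrix} a & b \\ c & d \end{psmallmatrix} \in K_0(\pp^m)$, the first $n-1$ entries of the resulting bottom row are $(r_1, \ldots, r_{n-1}) a + r_n c$; since $a \in K_{n-1}$ preserves the ideal of $\OO$ generated by $(r_1, \ldots, r_{n-1})$ and $c$ has entries in $\pp^m$, the truncated minimum valuation $\ell$ is again preserved.

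For surjectivity, I will show every $k \in K$ satisfies $k \in K_0(\pp^m) g_{\ell(k)} K_0(\pp^m)$. If $\ell(k) = m$, then by definition $k \in K_0(\pp^m)$ and also $g_m \in K_0(\pp^m)$, so both lie in the trivial double coset. If $\ell := \ell(k) < m$, then $(r_1, \ldots, r_{n-1})$ equals $\varpi^{\ell} s$ for some primitive row $s \in \OO^{n-1}$, and I can pick $a \in K_{n-1}$ with $sa = e_{n-1}$; right-multiplying $k$ by $\diag(a, 1) \in K_0(\pp^m)$ produces a matrix with bottom row $(0, \ldots, 0, \varpi^{\ell}, r_n)$. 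A further right-multiplication---by $\diag(1_{n-1}, r_n^{-1}) \in K_0(\pp^m)$ when $r_n \in \OO^{\times}$, or by a suitable element of $K_{n-1,1} \subseteq K_0(\pp^m)$ in the remaining case $\ell = 0$ with $r_n \notin \OO^{\times}$---brings the bottom row to $(0, \ldots, 0, \varpi^{\ell}, 1)$, matching the bottom row of $g_{\ell}$. Calling this modified matrix $k^*$, a direct block computation shows $k^* g_{\ell}^{-1}$ has bottom row $(0, \ldots, 0, 1)$ and hence lies in $K_0(\pp^m)$, which yields the required $k \in K_0(\pp^m) g_{\ell} K_0(\pp^m)$.

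The main obstacle is the bookkeeping in the right-multiplication invariance argument, where the correction term $r_n c$ mixes into the bottom row and one must verify that the $\pp$-adic ideal generated by the first $n-1$ bottom-row entries is genuinely preserved modulo $\pp^m$; everything else is then a matter of performing explicit elementary column operations to normalise the bottom row.
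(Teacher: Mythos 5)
Your proof is correct, and it takes a genuinely different route from the paper's. The paper only establishes the covering, by explicit factorisation: writing $k = \begin{psmallmatrix} a & b \\ c & d \end{psmallmatrix}$ and splitting into cases according to $\max\{|c_1|,\ldots,|c_{n-1}|\}$, it exhibits $k$ as a product of an element of $K_0(\pp^m)$, the representative $g_{\ell} = \begin{psmallmatrix} 1_{n-1} & 0 \\ \varpi^{\ell} e_{n-1} & 1 \end{psmallmatrix}$, and another element of $K_0(\pp^m)$; in the hardest case, where $c$ is primitive, it must invoke a lemma of Chang to produce $\beta \in \Mat_{(n-1)\times 1}(\OO)$ with $\det(a - \beta c) \in \OO^{\times}$ so that the left factor can be taken of mirabolic shape. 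You avoid Chang's lemma entirely by exploiting that membership in $K_0(\pp^m)$ is a condition on the bottom row alone: after normalising the bottom row to $(0,\ldots,0,\varpi^{\ell},1)$ by right multiplications, the matrix $k^* g_{\ell}^{-1}$ lies in $K_0(\pp^m)$ regardless of its upper block. You also prove the disjointness explicitly via the invariant $\ell(\cdot)$, which the paper leaves implicit. The trade-off: the paper's argument yields completely explicit factorisations, while yours is more self-contained and cleanly separates existence from uniqueness of the representative. Two small points you should spell out: the sub-case $\ell \geq 1$ with $r_n \notin \OO^{\times}$ cannot occur because the bottom row of an element of $\GL_n(\OO)$ must contain a unit (this is what makes your normalisation case split exhaustive), and the right-invariance of $\ell(\cdot)$ uses $a \in K_{n-1}$ for elements of $K_0(\pp^m)$, which holds only when $m \geq 1$ --- but for $m = 0$ the lemma is trivial.
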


For $n = 2$, this follows from \cite[Lemma 1]{Cas73b} and \cite[Lemma 2.1.1]{Sch02}, while the same result with $K_n = \GL_n(\OO)$ replaced by $\SL_n(\OO)$ is implicit in the work of Chang \cite{Cha98}.

\begin{proof}
Let $\begin{psmallmatrix} a & b \\ c & d \end{psmallmatrix} \in K$ with $a \in \Mat_{(n - 1) \times (n - 1)}(\OO)$, $b \in \Mat_{(n - 1) \times 1}(\OO)$, $c \in \Mat_{1 \times (n - 1)}(\OO)$, and $d \in \OO$. There are three cases to consider.
\begin{enumerate}[leftmargin=*]
\item[(1)] If $\max\{|c_1|,\ldots,|c_{n - 1}|\} \leq q^{-m}$, then $\begin{psmallmatrix} a & b \\ c & d \end{psmallmatrix} \in K_0(\pp^m) = K_0(\pp^m) \begin{psmallmatrix} 1_{n - 1} & 0 \\ \varpi^{m} e_{n - 1} & 1 \end{psmallmatrix} K_0(\pp^m)$.
\item[(2)] If $\max\{|c_1|,\ldots,|c_{n - 1}|\} = q^{-\ell}$ for some $\ell \in \{1,\ldots,m - 1\}$, then $a \in K_{n - 1}$; as $\varpi^{-\ell} ca^{-1} \in S^{n - 2}$, there exists some $\alpha \in K_{n - 1}$ such that $e_{n - 1} \alpha^{-1} = \varpi^{-\ell} ca^{-1}$, and we have that
\[\begin{pmatrix} a & b \\ c & d \end{pmatrix} = \begin{pmatrix} \alpha & 0 \\ 0 & 1 \end{pmatrix} \begin{pmatrix} 1_{n - 1} & 0 \\ \varpi^{\ell} e_{n - 1} & 1 \end{pmatrix} \begin{pmatrix} \alpha^{-1} a & \alpha^{-1} b \\ 0 & -ca^{-1} b + d \end{pmatrix}.\]
\item[(3)] Finally, if $\max\{|c_1|,\ldots,|c_{n - 1}|\} = 1$, then $c \in S^{n - 2}$. By \cite[Lemma 3]{Cha98}, there exists $\beta \in \Mat_{(n - 1) \times 1}(\OO)$ such that $\det(a - \beta c) \in \OO^{\times}$, so that $a - \beta c \in K_{n - 1}$. As $c(a - \beta c)^{-1} \in S^{n - 2}$, there exists some $\alpha \in K_{n - 1}$ such that $e_{n - 1} \alpha^{-1} = c(a - \beta c)^{-1}$, and we have that
\[\begin{pmatrix} a & b \\ c & d \end{pmatrix} = \begin{pmatrix} \alpha & \beta \\ 0 & 1 \end{pmatrix} \begin{pmatrix} 1_{n - 1} & 0 \\ e_{n - 1} & 1 \end{pmatrix} \begin{pmatrix} \alpha^{-1} (a - \beta c) & \alpha^{-1} (b - \beta d) \\ 0 & -c(a - \beta c)^{-1} (b - \beta d) + d \end{pmatrix}.\qedhere\]
\end{enumerate}
\end{proof}

\begin{proof}[Proof of {\hyperref[thm:Hchimirred]{Theorem \ref*{thm:Hchimirred}}}]
For $m \geq c(\chi)$, we identify $\End_{K}(\Ind_{K_0(\pp^m)}^{K} \psi_{\chi})$ with the space of locally constant functions $\phi : K \to \C$ that satisfy $\phi(k_0 k k_0') = \psi_{\chi}(k_0) \psi_{\chi}(k_0') \phi(k)$ for all $k \in K$ and $k_0,k_0' \in K_0(\pp^m)$. From \hyperref[lem:doublecoset]{Lemma \ref*{lem:doublecoset}}, we deduce that for each integer $m \geq c(\chi)$,
\[\dim \End_{K}\left(\Ind_{K_0(\pp^m)}^{K} \psi_{\chi}\right) = m - c(\chi) + 1.\]
Since $\Ind_{K_0(\pp^m)}^{K} \psi_{\chi} \cong \bigoplus_{\ell = c(\chi)}^{m} \HH_{\chi,\ell}(S^{n - 1})$ from \eqref{eqn:HtoInd}, we conclude that $\End_{K}(\HH_{\chi,m}(S^{n - 1}))$ is one-dimensional by induction, and hence that $\HH_{\chi,m}(S^{n - 1})$ is irreducible. Finally, the orthogonal decompositions \eqref{eqn:CchiKmdecomp} and \eqref{eqn:Cdecomp} are clear via \eqref{eqn:CKmdecomp} and the fact that the union of the spaces $C^{\infty}(S^{n - 1})^{K(\pp^m)}$ is equal in $C^{\infty}(S^{n - 1})$.
\end{proof}

\begin{remark}
The key input to the proof of \hyperref[thm:Hchimirred]{Theorem \ref*{thm:Hchimirred}} is the fact that the double coset space $K_0(\pp^m) \backslash K / K_0(\pp^m)$ has cardinality $m + 1$, which is a consequence of \hyperref[lem:doublecoset]{Lemma \ref*{lem:doublecoset}}. We sketch below a more geometric proof of this fact. First, we note that this double coset space is also equal to $K_1(\pp^m) \backslash K / K_0(\pp^m)$. Since these groups all contain $K(\pp^m)$ as a normal subgroup, the cardinality of this double coset space is the same as that of
\[(K_0(\pp^m) / K(\pp^m)) \backslash (K/K(\pp^m)) / (K_0(\pp^m) / K(\pp^m)),\]
which in turn is the same as that of
\[\Pgp_n(\OO/\pp^m) \backslash \GL_n(\OO/\pp^m) / \Pgp_{(n - 1,1)}(\OO/\pp^m).\]
Here $\Pgp_n$ denotes the mirabolic subgroup \eqref{eqn:mirabolic} while $\Pgp_{(n - 1,1)}$ denotes the standard maximal parabolic subgroup of type $(n - 1,1)$. As in \eqref{eqn:barSn-1}, we identify $\Pgp_n(\OO/\pp^m) \backslash \GL_n(\OO/\pp^m)$ with $\overline{S}^{n - 1}$. Since $K_{n - 1}$ acts transitively on $S^{n - 2}$, we deduce that the action of $\Pgp_{(n - 1,1)}(\OO/\pp^m)$ on $\overline{S}^{n - 1}$ has $m + 1$ orbits, which are given by
\[\left\{\overline{x} = (\overline{x_1},\ldots,\overline{x_n}) \in \overline{S}^{n - 1} : \max\{|\overline{x_1}|,\ldots,|\overline{x_{n - 1}}|\} = q^{-j}\right\}\]
for $j \in \{0,\ldots,m\}$.
\end{remark}

\section{Zonal Spherical Functions}
\label{sect:zonal}

Let
\[C^{\infty}(S^{n - 1})^{K_{n - 1,1}} \coloneqq \left\{f \in C^{\infty}(S^{n - 1}) : f(xk') = f(x) \text{ for all $x \in S^{n - 1}$ and $k' \in K_{n - 1,1}$}\right\}\]
denote the subspace of $K_{n - 1,1}$-invariant locally constant functions on the unit sphere. We identify precisely which elements of this lie in $\HH_{\chi,m}(S^{n - 1})$ for each character $\chi$ of $\OO^{\times}$ and nonnegative integer $m \geq c(\chi)$.

\begin{lemma}
The subspace $C^{\infty}(S^{n - 1})^{K_{n - 1,1}} \cap C^{\infty}(S^{n - 1})_{\chi}^{K(\pp^m)}$ of $C^{\infty}(S^{n - 1})_{\chi}^{K(\pp^m)}$ has dimension $m - c(\chi) + 1$ and is spanned by the functions
\begin{equation}
\label{eqn:phi}
\phi_{\chi,\ell}(x_1,\ldots,x_n) \coloneqq \begin{dcases*}
\chi(x_n) & if $\max\{|x_1|,\ldots,|x_{n - 1}|\} \leq q^{-\ell}$,	\\
0 & if $q^{-\ell} < \max\{|x_1|,\ldots,|x_{n - 1}|\} \leq 1$,
\end{dcases*}
\end{equation}
for $\ell \in \{c(\chi),\ldots,m\}$. Furthermore, for $\ell_1,\ell_2 \geq c(\chi)$,
\begin{equation}
\label{eqn:phiinnerproduct}
\left\langle \phi_{\chi,\ell_1}, \phi_{\chi,\ell_2}\right\rangle = \begin{dcases*}
1 & if $\ell_1 = \ell_2 = 0$,	\\
\frac{q - 1}{q^{(\max\{\ell_1,\ell_2\} - 1)(n - 1)} (q^n - 1)} & if $\max\{\ell_1,\ell_2\} \geq 1$.
\end{dcases*}
\end{equation}
\end{lemma}

\begin{proof}
That the dimension of this subspace is $m - c(\chi) + 1$ is a direct consequence of \eqref{eqn:CchiKmdecomp} and \hyperref[cor:onedim]{Corollary \ref*{cor:onedim}}. It is then straightforward to see that $\phi_{\chi,\ell}$ is an element of this subspace for each $\ell \in \{c(\chi),\ldots,m\}$ and that these are linearly independent. Finally, the identity \eqref{eqn:phiinnerproduct} is immediate from the definition \eqref{eqn:phi} of $\phi_{\chi,\ell}$, for this implies that
\[\left\langle \phi_{\chi,\ell_1}, \phi_{\chi,\ell_2}\right\rangle = \vol(K_0(\pp^{\max\{\ell_1,\ell_2\}})) = \frac{1}{\left[K : K_0(\pp^{\max\{\ell_1,\ell_2\}}) \right]},\]
which is precisely the right-hand side of \eqref{eqn:phiinnerproduct}.
\end{proof}

It is clear that $\phi_{\chi,c(\chi)} \in \HH_{\chi,c(\chi)}(S^{n - 1})^{K_{n - 1,1}}$ and that $\phi_{\chi,c(\chi)}(e_n) = 1$. We show that there exist similar elements $P_{\chi,m}^{\circ}$ in $\HH_{\chi,m}(S^{n - 1})^{K_{n - 1,1}}$ for each $m \geq c(\chi)$, which must be a linear combination of the functions $\phi_{\chi,c(\chi)},\ldots,\phi_{\chi,m}$. We deduce the precise linear combination by making use of the fact that $P_{\chi,m}^{\circ}$ is orthogonal to $P_{\chi,j}^{\circ}$ whenever $j \in \{c(\chi),\ldots,m - 1\}$.

\begin{proposition}
\label{prop:Pcirc}
For each $\chi \in \widehat{\OO^{\times}}$ and $m \geq c(\chi)$, there exists a unique locally constant function $P_{\chi,m}^{\circ}$ in $\HH_{\chi,m}(S^{n - 1})$ satisfying $P_{\chi,m}^{\circ}(xk') = P_{\chi,m}^{\circ}(x)$ for all $x \in S^{n - 1}$ and $k' \in K_{n - 1,1}$ and $P_{\chi,m}^{\circ}(e_n) = 1$. This function is given by
\begin{equation}
\label{eqn:Pchimcircdef}
P_{\chi,m}^{\circ}(x_1,\ldots,x_n) = \begin{dcases*}
\chi(x_n) & if $\max\{|x_1|,\ldots,|x_{n - 1}|\} \leq q^{-m}$,	\\
\alpha_{\chi,m;m - 1} \chi(x_n) & if $m > c(\chi)$ and $\max\{|x_1|,\ldots,|x_{n - 1}|\} = q^{-m + 1}$,	\\
0 & otherwise,
\end{dcases*}
\end{equation}
where
\begin{equation}
\label{eqn:alphachimm-1}
\alpha_{\chi,m;m - 1} = \begin{dcases*}
- \frac{q - 1}{q(q^{n - 1} - 1)} & if $m = 1$,	\\
- \frac{1}{q^{n - 1} - 1} & if $m \geq 2$.
\end{dcases*}
\end{equation}
In particular, for all $k \in K$, we have that
\begin{equation}
\label{eqn:k-1bar}
P_{\chi,m}^{\circ}(e_n k) = \overline{P_{\chi,m}^{\circ}}(e_n k^{-1}).
\end{equation}
\end{proposition}

\begin{definition}
We call $P_{\chi,m}^{\circ}$ the zonal spherical function on $S^{n - 1}$ of character $\chi$ and level $m$.
\end{definition}

\begin{proof}[Proof of {\hyperref[prop:Pcirc]{Proposition \ref*{prop:Pcirc}}}]
Since $\HH_{\chi,m}(S^{n - 1})^{K_{n - 1,1}}$ is one-dimensional from \hyperref[cor:onedim]{Corollary \ref*{cor:onedim}}, there is a unique function $P_{\chi,m}^{\circ} \in \HH_{\chi,m}(S^{n - 1})^{K_{n - 1,1}}$ satisfying $P_{\chi,m}^{\circ}(e_n) = 1$. From \eqref{eqn:phi}, there exist constants $\alpha_{\chi,m;\ell} \in \C$ for $\ell \in \{c(\chi),\ldots,m\}$ such that
\begin{align*}
P_{\chi,m}^{\circ}(x_1,\ldots,x_n) & = \sum_{\ell = c(\chi)}^{m} \alpha_{\chi,m;\ell} \varphi_{\chi,m;\ell}(x)	\\
& = \begin{dcases*}
\alpha_{\chi,m;m} \chi(x_n) & if $\max\{|x_1|,\ldots,|x_{n - 1}|\} \leq q^{-m}$,	\\
\alpha_{\chi,m;\ell} \chi(x_n) & if $q^{-m} < \max\{|x_1|,\ldots,|x_{n - 1}|\} = q^{-\ell} \leq q^{-c(\chi)}$,	\\
0 & if $q^{-c(\chi)} < \max\{|x_1|,\ldots,|x_{n - 1}|\} \leq 1$,
\end{dcases*}
\end{align*}
where
\begin{align*}
\varphi_{\chi,m;\ell}(x_1,\ldots,x_n) & \coloneqq \begin{dcases*}
\phi_{\chi,\ell}(x_1,\ldots,x_n) - \phi_{\chi,\ell + 1}(x_1,\ldots,x_n) & for $\ell \in \{c(\chi),\ldots,m - 1\}$,	\\
\phi_{\chi,m}(x_1,\ldots,x_n) & for $\ell = m$,
\end{dcases*}	\\
& = \begin{dcases*}
\chi(x_n) & if $\ell \in \{c(\chi),\ldots, m - 1\}$ and $\max\{|x_1|,\ldots,|x_{n - 1}|\} = q^{-\ell}$,	\\
\chi(x_n) & if $\ell = m$ and $\max\{|x_1|,\ldots,|x_{n - 1}|\} \leq q^{-m}$,	\\
0 & otherwise.
\end{dcases*}
\end{align*}

Since $P_{\chi,m}^{\circ}(e_n) = 1$, we have that $\alpha_{\chi,m;m} = 1$. To deduce properties of the remaining coefficients, we make use the fact that
\begin{equation}
\label{eqn:betachimell}
\beta_{\chi,m;\ell} \coloneqq \left\langle \varphi_{\chi,m;\ell}, \varphi_{\chi,m;\ell}\right\rangle = \begin{dcases*}
\left\langle \phi_{\chi,\ell},\phi_{\chi,\ell}\right\rangle - \left\langle \phi_{\chi,\ell + 1}, \phi_{\chi,\ell + 1}\right\rangle & if $\ell \in \{c(\chi),\ldots,m - 1\}$,	\\
\left\langle \phi_{\chi,m}, \phi_{\chi,m}\right\rangle & if $\ell = m$	\\
\end{dcases*}
\end{equation}
from \eqref{eqn:phiinnerproduct}, while
\begin{equation}
\label{eqn:ell1neqell2}
\left\langle \varphi_{\chi,m;\ell_1}, \varphi_{\chi,m;\ell_2}\right\rangle = 0 \ \text{ if $\ell_1 \neq \ell_2$}
\end{equation}
since these have disjoint support. The zonal spherical function $P_{\chi,m}^{\circ}$ is orthogonal to $P_{\chi,j}^{\circ}$ for all $j \in \{c(\chi),\ldots,m - 1\}$ as $\HH_{\chi,m}(S^{n - 1})$ and $\HH_{\chi,j}(S^{n - 1})$ are mutually orthogonal. Writing
\[P_{\chi,j}^{\circ}(x_1,\ldots,x_n) = \sum_{\ell = c(\chi)}^{j - 1} \alpha_{\chi,j;\ell} \varphi_{\chi,m;\ell}(x_1,\ldots,x_n) + \sum_{\ell = j}^{m} \varphi_{\chi,m;\ell}(x_1,\ldots,x_n),\]
we deduce from \eqref{eqn:betachimell} and \eqref{eqn:ell1neqell2} that for each $j \in \{c(\chi),\ldots,m - 1\}$,
\begin{equation}
\label{eqn:innerproductorth}
\left\langle P_{\chi,m}^{\circ}, P_{\chi,j}^{\circ}\right\rangle = \sum_{\ell = c(\chi)}^{j - 1} \alpha_{\chi,m;\ell} \overline{\alpha_{\chi,j;\ell}} \beta_{\chi,m;\ell} + \sum_{\ell = j}^{m} \alpha_{\chi,m;\ell} \beta_{\chi,m;\ell} = 0.
\end{equation}

Using \eqref{eqn:innerproductorth}, we shall prove by induction that
\begin{equation}
\label{eqn:alphainduct}
\alpha_{\chi,m;\ell} = \begin{dcases*}
1 & if $\ell = m$,	\\
-\frac{\beta_{\chi,m;m}}{\beta_{\chi,m;m - 1}} & if $m > c(\chi)$ and $\ell = m - 1$,	\\
0 & otherwise,
\end{dcases*}
\end{equation}
which yields \eqref{eqn:Pchimcircdef}; the identity \eqref{eqn:alphachimm-1} then follows from \eqref{eqn:phiinnerproduct} and \eqref{eqn:betachimell}. The base case of \eqref{eqn:alphainduct} is $m = c(\chi)$, so that $\ell = c(\chi)$, in which case we have that $\alpha_{\chi,c(\chi);c(\chi)} = 1$ since $P_{\chi,c(\chi)}^{\circ}(e_n) = 1$. Now we suppose that \eqref{eqn:alphainduct} holds with $m$ replaced by $j$ for each $j \in \{c(\chi),\ldots,m - 1\}$. From \eqref{eqn:innerproductorth}, we have by the induction hypothesis that for $m \geq c(\chi) + 2$,
\[\left\langle P_{\chi,m}^{\circ}, P_{\chi,c(\chi)}^{\circ}\right\rangle - \left\langle P_{\chi,m}^{\circ}, P_{\chi,c(\chi) + 1}^{\circ}\right\rangle = \alpha_{\chi,m;c(\chi)} \beta_{\chi,m;c(\chi)} \left(1 - \frac{\beta_{\chi,c(\chi) + 1;c(\chi) + 1}}{\beta_{\chi,c(\chi) + 1;c(\chi)}}\right) = 0.\]
Thus $\alpha_{\chi,m;c(\chi)} = 0$. Proceeding inductively, we conclude that for all $j \in \{c(\chi),\ldots, m - 2\}$,
\[\left\langle P_{\chi,m}^{\circ}, P_{\chi,j}^{\circ}\right\rangle - \left\langle P_{\chi,m}^{\circ}, P_{\chi,j + 1}^{\circ}\right\rangle = \alpha_{\chi,m;j} \beta_{\chi,m;j} \left(1 - \frac{\beta_{\chi,j + 1;j + 1}}{\beta_{\chi,j + 1;j}}\right) = 0,\]
so that $\alpha_{\chi,m;j} = 0$. Finally, for $m \geq c(\chi) + 1$, we have that
\[\left\langle P_{\chi,m}^{\circ}, P_{\chi,m - 1}^{\circ}\right\rangle = \alpha_{\chi,m;m - 1} \beta_{\chi,m;m - 1} + \beta_{\chi,m;m} = 0,\]
which yields the remaining case $\ell = m - 1$ of \eqref{eqn:alphainduct}.

It remains to prove \eqref{eqn:k-1bar}. It suffices to show that for all $k \in K$,
\begin{equation}
\label{eqn:phik-1bar}
\phi_{\chi,\ell}(e_n k) = \overline{\phi_{\chi,\ell}}(e_n k^{-1})
\end{equation}
for each nonnegative integer $\ell \geq c(\chi)$, since $P_{\chi,m}^{\circ}$ may be written as a linear combination of such functions. The identity \eqref{eqn:phik-1bar} is clear if $k \notin K_0(\pp^{\ell})$, for then both sides are zero. If $k \in K_0(\pp^{\ell})$, so that $k^{-1} \in K_0(\pp^{\ell})$ as well, then $e_n k \prescript{t}{}{e}_n e_n k^{-1} \prescript{t}{}{e}_n - 1 \in \pp^{\ell}$ as $k k^{-1} = 1_n$, and consequently $\chi(e_n k \prescript{t}{}{e}_n) = \overline{\chi}(e_n k^{-1} \prescript{t}{}{e}_n)$ since $\ell \geq c(\chi)$, which yields the result.
\end{proof}

The zonal spherical function $P_{\chi,m}^{\circ}$ is useful for understanding various properties of the space $\HH_{\chi,m}(S^{n - 1})$. Our first application is the following lemma, which may be thought of as the addition theorem for $\HH_{\chi,m}(S^{n - 1})$.

\begin{lemma}
\label{lem:addthm}
Let $\{Q_j\}$ be an orthonormal basis of $\HH_{\chi,m}(S^{n - 1})$. Then for any $x \in S^{n - 1}$ and $k \in K$, we have that
\begin{equation}
\label{eqn:addthm}
\sum_{j = 1}^{\dim \HH_{\chi,m}(S^{n - 1})} Q_j(x) \overline{Q_j}(e_n k) = \dim \HH_{\chi,m}(S^{n - 1}) P_{\chi,m}^{\circ}(xk^{-1}).
\end{equation}
\end{lemma}

\begin{proof}
Since
\begin{equation}
\label{eqn:Qexpansion}
Q(xk) = (\tau_{\chi,m}(k) \cdot Q)(x) = \sum_{j = 1}^{\dim \HH_{\chi,m}(S^{n - 1})} \left\langle \tau_{\chi,m}(k) \cdot Q, Q_j \right\rangle Q_j(x)
\end{equation}
for any $Q \in \HH_{\chi,m}(S^{n - 1})$ and $x \in S^{n - 1}$, we have that for any $k \in K$,
\begin{equation}
\label{eqn:delta12}
\begin{split}
\delta_{\ell_1, \ell_2} & = \left\langle Q_{\ell_1}, Q_{\ell_2}\right\rangle	\\
& = \left\langle \tau_{\chi,m}(k^{-1}) \cdot Q_{\ell_1}, \tau_{\chi,m}(k^{-1}) \cdot Q_{\ell_2}\right\rangle	\\
& = \sum_{j_1, j_2 = 1}^{\dim \HH_{\chi,m}(S^{n - 1})} \left\langle \tau_{\chi,m}(k^{-1}) \cdot Q_{\ell_1}, Q_{j_1}\right\rangle \left\langle Q_{j_2}, \tau_{\chi,m}(k^{-1}) \cdot Q_{\ell_2}\right\rangle \left\langle Q_{j_1}, Q_{j_2}\right\rangle	\\
& = \sum_{j = 1}^{\dim \HH_{\chi,m}(S^{n - 1})} \left\langle \tau_{\chi,m}(k^{-1}) \cdot Q_{\ell_1}, Q_j\right\rangle \left\langle Q_j, \tau_{\chi,m}(k^{-1}) \cdot Q_{\ell_2}\right\rangle	\\
& = \sum_{j = 1}^{\dim \HH_{\chi,m}(S^{n - 1})} \left\langle Q_{\ell_1}, \tau_{\chi,m}(k) \cdot Q_j\right\rangle \left\langle \tau_{\chi,m}(k) \cdot Q_j, Q_{\ell_2}\right\rangle.
\end{split}
\end{equation}
For fixed $k \in K$, we now define $\QQ_{e_n k} \in \HH_{\chi,m}(S^{n - 1})$ by
\[\QQ_{e_n k}(x) \coloneqq \sum_{j = 1}^{\dim \HH_{\chi,m}(S^{n - 1})} Q_j(x) \overline{Q_j}(e_n k).\]
Then from \eqref{eqn:Qexpansion} and \eqref{eqn:delta12}, we have that for all $k_1 \in K$,
\begin{equation}
\label{eqn:QQ}
\begin{split}
\QQ_{e_n k k_1}(xk_1) & = \sum_{j = 1}^{\dim \HH_{\chi,m}(S^{n - 1})} Q_j(xk_1) \overline{Q_j}(e_n k k_1)	\\
& = \sum_{j, \ell_1, \ell_2 = 1}^{\dim \HH_{\chi,m}(S^{n - 1})} \left\langle \tau_{\chi,m}(k_1) \cdot Q_j, Q_{\ell_1}\right\rangle \left\langle Q_{\ell_2}, \tau_{\chi,m}(k_1) \cdot Q_j\right\rangle Q_{\ell_1}(x) \overline{Q_{\ell_2}}(e_n k)	\\
& = \sum_{\ell = 1}^{\dim \HH_{\chi,m}(S^{n - 1})} Q_{\ell}(x) \overline{Q_{\ell}}(e_n k)	\\
& = \QQ_{e_n k}(x).
\end{split}
\end{equation}
We use \eqref{eqn:QQ} with $x$ replaced by $xk$ and $k_1$ replaced by $k^{-1} k' k$ for $k' \in K_{n - 1,1}$. Since $e_n k' = e_n$, we deduce that
\[\left(\tau_{\chi,m}(k') \cdot \left(\tau_{\chi,m}(k) \cdot \QQ_{e_n k}\right)\right)(x) = \QQ_{e_n k}(x k' k) = \QQ_{e_n k}(xk) = \left(\tau_{\chi,m}(k) \cdot \QQ_{e_n k}\right)(x)\]
for all $k' \in K_{n - 1,1}$. As $\HH_{\chi,m}(S^{n - 1})^{K_{n - 1,1}}$ is one-dimensional, it follows that $\tau_{\chi,m}(k) \cdot \QQ_{e_n k}$ must be a constant multiple of $P_{\chi,m}^{\circ}$, and this constant is readily seen to be $\QQ_{e_n k}(e_n k)$ upon taking $x = e_n$. So for all $x \in S^{n - 1}$ and $k \in K$,
\begin{align*}
\sum_{j = 1}^{\dim \HH_{\chi,m}(S^{n - 1})} Q_j(x) \overline{Q_j}(e_n k) & = \QQ_{e_n k}(x)	\\
& = \left(\tau_{\chi,m}(k) \cdot \QQ_{e_n k}\right)(xk^{-1})	\\
& = \QQ_{e_n k}(e_n k) P_{\chi,m}^{\circ}(xk^{-1})	\\
& = \sum_{j = 1}^{\dim \HH_{\chi,m}(S^{n - 1})} \left|Q_j(e_n k)\right|^2 P_{\chi,m}^{\circ}(xk^{-1}).
\end{align*}

It remains to show that for all $k \in K$,
\begin{equation}
\label{eqn:constant}
\sum_{j = 1}^{\dim \HH_{\chi,m}(S^{n - 1})} \left|Q_j(e_n k)\right|^2 = \dim \HH_{\chi,m}(S^{n - 1}).
\end{equation}
To prove \eqref{eqn:constant}, we take $x = e_n k$ and $k_1 = k^{-1}$ in \eqref{eqn:QQ} in order to see that
\[\sum_{j = 1}^{\dim \HH_{\chi,m}(S^{n - 1})} \left|Q_j(e_n k)\right|^2 = \sum_{j = 1}^{\dim \HH_{\chi,m}(S^{n - 1})} \left|Q_j(e_n)\right|^2\]
for all $k \in K$ and hence the left-hand side is a constant. Integrating over $K \ni k$, we find that
\begin{align*}
\sum_{j = 1}^{\dim \HH_{\chi,m}(S^{n - 1})} |Q_j(e_n k)|^2 & = \int_{K} \sum_{j = 1}^{\dim \HH_{\chi,m}(S^{n - 1})} |Q_j(e_n k)|^2 \, dk	\\
& = \sum_{j = 1}^{\dim \HH_{\chi,m}(S^{n - 1})} \left\langle Q_j, Q_j\right\rangle	\\
& = \dim \HH_{\chi,m}(S^{n - 1}).
\qedhere
\end{align*}
\end{proof}

A simple consequence of \hyperref[lem:addthm]{Lemma \ref*{lem:addthm}} is the following result showing that certain matrix coefficients of $\tau_{\chi,m}$ are themselves elements of $\HH_{\chi,m}(S^{n - 1})$.

\begin{corollary}
\label{cor:reproducing}
The reproducing kernel for $\HH_{\chi,m}(S^{n - 1})$ is $(\dim \HH_{\chi,m}(S^{n - 1})) \tau_{\chi,m}(k^{-1}) \cdot P_{\chi,m}^{\circ}$, so that for all $P \in \HH_{\chi,m}(S^{n - 1})$ and $k \in K$,
\begin{equation}
\label{eqn:reproducing}
P(e_n k) = \dim \HH_{\chi,m}(S^{n - 1}) \left\langle \tau_{\chi,m}(k) \cdot P, P_{\chi,m}^{\circ}\right\rangle.
\end{equation}
In particular,
\begin{equation}
\label{eqn:PchimcircL2}
\left\langle P_{\chi,m}^{\circ}, P_{\chi,m}^{\circ} \right\rangle = \frac{1}{\dim \HH_{\chi,m}(S^{n - 1})}.
\end{equation}
\end{corollary}

One can also show \eqref{eqn:PchimcircL2} in a more direct fashion by combining \eqref{eqn:phiinnerproduct}, \eqref{eqn:Pchimcircdef}, and \eqref{eqn:alphachimm-1}.

\begin{proof}
From \eqref{eqn:addthm} and \eqref{eqn:Qexpansion}, we have that
\begin{align*}
P(e_n k) & = \sum_{j = 1}^{\dim \HH_{\chi,m}(S^{n - 1})} \left\langle P, Q_j\right\rangle Q_j(e_n k)	\\
& = \left\langle P, \sum_{j = 1}^{\dim \HH_{\chi,m}(S^{n - 1})} Q_j \overline{Q_j}(e_n k)\right\rangle	\\
& = \left\langle P, \dim \HH_{\chi,m}(S^{n - 1})\tau_{\chi,m}(k^{-1}) \cdot P_{\chi,m}^{\circ}\right\rangle	\\
& = \dim \HH_{\chi,m}(S^{n - 1}) \left\langle \tau_{\chi,m}(k) \cdot P, P_{\chi,m}^{\circ}\right\rangle.
\end{align*}
The identity \eqref{eqn:PchimcircL2} then follows upon taking $P = P_{\chi,m}^{\circ}$ and $k = 1_n$ since $P_{\chi,m}^{\circ}(e_n) = 1$.
\end{proof}

\section{The Newform \texorpdfstring{$K$}{K}-Type}
\label{sect:Ktype}

\subsection{The Newform and the Conductor Exponent}

Let $(\pi,V_{\pi})$ be an induced representation of Langlands type of $\GL_n(F)$. Thus there exist positive integers $n_1,\ldots,n_r$ for which $n_1 + \cdots + n_r = n$ and essentially square-integrable representations $(\pi_j,V_{\pi_j})$ of $\GL_{n_j}(F)$ of the form $\sigma_j \otimes \left|\det\right|^{t_j}$, where $\sigma_j$ is square-integrable and $t_j \in \C$ satisfies $\Re(t_1) \geq \cdots \geq \Re(t_r)$, such that
\[\pi = \Ind_{\Pgp(F)}^{\GL_n(F)} \bigboxtimes_{j = 1}^{r} \pi_j,\]
the representation obtained by normalised parabolic induction from the standard upper parabolic subgroup $\Pgp(F) = \Pgp_{(n_1,\ldots,n_r)}(F)$ of $\GL_n(F)$. One can take as a model for $\pi$ the space of smooth functions $f : \GL_n(F) \to V_{\pi_1} \otimes \cdots \otimes V_{\pi_r}$, upon which $\pi$ acts via right translations, that satisfy
\[f(umg) = \prod_{j = 1}^{r} \left|\det m_j\right|^{\frac{1}{2} (n - 2(n_1 + \cdots + n_{j - 1}) - n_j)} \bigotimes_{j = 1}^{r} \pi_j(m_j) \cdot f(g)\]
for all $u \in \Ngp_{\Pgp}(F)$, the unipotent radical of $\Pgp(F)$, $m = \blockdiag(m_1,\ldots,m_r) \in \Mgp_{\Pgp}(F)$, the Levi subgroup of $\Pgp(F)$, and $g \in \GL_n(F)$. We note that every generic irreducible admissible smooth representation of $\GL_n(F)$ is isomorphic to some induced representation of Langlands type; see, for example, \cite{JS83}.

For each nonnegative integer $m$ and each character $\chi \in \widehat{\OO^{\times}}$ for which $0 \leq c(\chi) \leq m$, we define the subspaces
\begin{align*}
V_{\pi}^{K_1(\pp^m)} & \coloneqq \left\{v \in V_{\pi} : \pi(k) \cdot v = v \text{ for all } k \in K_1(\pp^m)\right\},	\\
V_{\pi}^{K_0(\pp^m), \chi} & \coloneqq \left\{v \in V_{\pi} : \pi(k) \cdot v = \psi_{\chi}(k) v \text{ for all } k \in K_0(\pp^m)\right\},
\end{align*}
where $\psi_{\chi}$ denotes the character of $K_0(\pp^m)$ corresponding to $\chi$. The former is the subspace of $V_{\pi}$ of $K_1(\pp^m)$-invariant vectors; the latter is the subspace of $(K_0(\pp^m),\psi_{\chi})$-equivariant vectors.

\begin{lemma}
Let $(\pi,V_{\pi})$ be an induced representation of Langlands type of $\GL_n(F)$. For each nonnegative integer $m$, the subspace $V_{\pi}^{K_1(\pp^m)}$ is the image of the projection map $\Pi^{K_1(\pp^m)} : V_{\pi} \to V_{\pi}$ given by
\begin{equation}
\label{eqn:PiK1}
\Pi^{K_1(\pp^m)}(v) \coloneqq \frac{1}{\vol(K_1(\pp^m))} \int_{K_1(\pp^m)} \pi(k) \cdot v \, dk,
\end{equation}
while $V_{\pi}^{K_0(\pp^m), \chi}$ is the image of the projection map $\Pi^{K_0(\pp^m), \chi} : V_{\pi} \to V_{\pi}$ given by
\begin{equation}
\label{eqn:PiK0}
\Pi^{K_0(\pp^m), \chi}(v) \coloneqq \begin{dcases*}
\int_{K} \pi(k) \cdot v \, dk & if $m = c(\chi) = 0$,	\\
\frac{1}{\vol(K_0(\pp^m))} \int_{K_0(\pp^m)} \overline{\chi}(e_n k \prescript{t}{}{e}_n) \pi(k) \cdot v \, dk & if $m > 0$.
\end{dcases*}
\end{equation}
Finally, we have that
\begin{equation}
\label{eqn:VpiK0toK1}
V_{\pi}^{K_0(\pp^m), \chi} = \begin{dcases*}
V_{\pi}^{K_1(\pp^m)} & if $\chi = \chi_{\pi}$,	\\
\{0\} & otherwise,
\end{dcases*}
\end{equation}
where $\chi_{\pi} \coloneqq \omega_{\pi}|_{\OO^{\times}}$ with $\omega_{\pi} : F^{\times} \to \C^{\times}$ the central character of $\pi$.
\end{lemma}

\begin{proof}
The two identities $\Pi^{K_1(\pp^m)}(V_{\pi}) = V_{\pi}^{K_1(\pp^m)}$ and $\Pi^{K_0(\pp^m), \chi}(V_{\pi}) = V_{\pi}^{K_0(\pp^m), \chi}$ are clear. Since $\Zgp(\OO) K_1(\pp^m) = K_0(\pp^m)$ and $\pi|_{\Zgp(\OO)} = \omega_{\pi}|_{\OO^{\times}} = \chi_{\pi}$, the identities $V_{\pi}^{K_0(\pp^m), \chi_{\pi}} = V_{\pi}^{K_1(\pp^m)}$ and $V_{\pi}^{K_0(\pp^m), \chi} = \{0\}$ for $\chi \neq \chi_{\pi}$ then follow.
\end{proof}

A fundamental result concerning induced representations of Langlands type is the existence of a newform. Jacquet, Piatetski-Shapiro, and Shalika \cite{JP-SS81} have shown that each induced representation of Langlands type $(\pi,V_{\pi})$ of $\GL_n(F)$ contains a distinguished vector $v^{\circ} \in V_{\pi}$, the newform, whose complexity is measured in a natural way by a nonnegative integer $c(\pi)$, the conductor exponent. This generalises a result of Casselman \cite{Cas73a}, who proved this for $n = 2$, and observed that when $F = \Q_p$ and $\pi$ is the local component of an automorphic representation of $\GL_2(\A_{\Q})$, this is the ad\`{e}lic reformulation of the Atkin--Lehner theory of newforms for classical modular forms \cite{AL70}.

\begin{theorem}[{\cite[Th\'{e}or\`{e}me (5)]{JP-SS81}}]
\label{thm:JP-SS}
Let $(\pi,V_{\pi})$ be an induced representation of Langlands type of $\GL_n(F)$. There exists a minimal nonnegative integer $m = c(\pi)$ for which the space $V_{\pi}^{K_1(\pp^m)} = V_{\pi}^{K_0(\pp^m), \chi_{\pi}}$ is nontrivial, in which case it is one-dimensional.
\end{theorem}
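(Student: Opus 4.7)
The plan is to follow the Jacquet--Piatetski-Shapiro--Shalika strategy via the Whittaker model and local Rankin--Selberg zeta integrals. Since $\pi$ is of Langlands type it is generic and hence admits a unique Whittaker model $\WW(\pi, \psi)$ with respect to a nondegenerate additive character $\psi$ of $F$ of conductor $\OO$. I would work throughout in $\WW(\pi, \psi)$ via the intertwining $V_\pi \cong \WW(\pi, \psi)$ and study the subspaces $\WW(\pi, \psi)^{K_1(\pp^m)}$ by means of the local zeta integrals
\begin{equation*}
\Psi(s, W) \coloneqq \int_{\Ngp_{n-1}(F) \backslash \GL_{n-1}(F)} W\begin{pmatrix} g & 0 \\ 0 & 1 \end{pmatrix} |\det g|^{s - \frac{n-1}{2}} \, dg.
\end{equation*}
Standard results assert that these integrals converge for $\Re(s)$ sufficiently large, extend meromorphically to $\C$, and that the $\C$-span of the functions $s \mapsto \Psi(s, W)$ as $W$ varies is a principal fractional ideal of $\C[q^{\pm s}]$ whose canonical generator is the local L-factor $L(s, \pi)$.

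For the existence half I would assemble an explicit Whittaker function $W^{\circ}$ inductively from the newforms of the essentially square-integrable constituents $\pi_j$ through the parabolic data $\pi = \Ind_{\Pgp(F)}^{\GL_n(F)} \bigboxtimes_{j=1}^{r} \pi_j$, extend from the mirabolic to $\GL_n(F)$ using the Iwasawa decomposition, and verify by a direct right-translation calculation that $W^{\circ}$ is $K_1(\pp^m)$-invariant for some explicit $m$ depending on the conductor exponents of the $\pi_j$. The nonvanishing $\Psi(s, W^{\circ}) = L(s, \pi) \neq 0$ forces $W^{\circ} \neq 0$. Since $K_1(\pp^m) \supset K_1(\pp^{m'})$ for $m \leq m'$, the set of $m$ with $\WW(\pi, \psi)^{K_1(\pp^m)} \neq 0$ is then nonempty and closed upward, hence has a minimum, which we define to be $c(\pi)$.

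Uniqueness at the minimal level is the main obstacle. The central technical claim is a \emph{degree bound}: for $W \in \WW(\pi, \psi)^{K_1(\pp^m)}$, the quotient $\Psi(s, W) / L(s, \pi)$ is a Laurent polynomial in $q^{-s}$ whose degree is controlled by $m - c(\pi)$, so that at the minimal level $m = c(\pi)$ this quotient is a scalar. Granting this, the map $W \mapsto \Psi(s, W) / L(s, \pi) \in \C$ defines a linear functional on $\WW(\pi, \psi)^{K_1(\pp^{c(\pi)})}$; if its kernel were nontrivial there would exist a nonzero $W$ with $\Psi(s, W) \equiv 0$. The local functional equation then forces the dual zeta integral $\Psi(1-s, \widetilde{W})$ of the contragredient Whittaker function to vanish identically, and a careful analysis of $K_1(\pp^m)$-invariant Whittaker functions — exploiting the interaction between the transformation law $W(ug) = \psi(u) W(g)$, right $K_1(\pp^m)$-invariance, and the Iwasawa coordinates on the mirabolic — permits the construction of a nonzero $K_1(\pp^{c(\pi) - 1})$-fixed vector, contradicting the minimality of $c(\pi)$. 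Establishing the degree bound and orchestrating this level-descent via the functional equation constitute the technical crux of the whole argument.
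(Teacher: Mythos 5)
This statement is not proved in the paper at all: it is imported verbatim from Jacquet--Piatetski-Shapiro--Shalika \cite{JP-SS81}, with the paper's own remark pointing out that the original argument there contains a gap, repaired later by Jacquet \cite{Jac12} and Matringe \cite{Mat13}. So there is no internal proof to compare against, and your proposal has to be judged as a free-standing argument. As such it is an outline of the right strategy, but the two steps you yourself call the ``technical crux'' --- the degree bound for $\Psi(s,W)/L(s,\pi)$ on $K_1(\pp^m)$-invariant vectors, and the level-descent producing a nonzero $K_1(\pp^{c(\pi)-1})$-fixed vector from a minimal-level $W$ with vanishing zeta integrals --- are only named, not established. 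That is exactly the locus of the known gap in \cite{JP-SS81}: controlling the restriction of a $K_1(\pp^m)$-invariant Whittaker function to the mirabolic is the delicate point, and the corrected proofs proceed quite differently (induction on $n$ via Bernstein--Zelevinsky derivatives and the Kirillov model, not a functional-equation-driven descent of the kind you sketch). Asserting that ``a careful analysis \ldots{} permits the construction'' does not constitute a proof of the one-dimensionality, which is the entire content of the theorem beyond soft existence.

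There is also a concrete technical error in the setup. For $n \geq 3$ the displayed integral
\[
\int_{\Ngp_{n-1}(F) \backslash \GL_{n-1}(F)} W\begin{pmatrix} g & 0 \\ 0 & 1 \end{pmatrix} \left|\det g\right|^{s - \frac{n-1}{2}} dg
\]
is not well defined: the integrand transforms under left translation by $\Ngp_{n-1}(F)$ through a nontrivial character of $\psi$-type, so it does not descend to the quotient. The correct Rankin--Selberg integrals pair $W$ against spherical Whittaker functions $W'$ of unramified generic representations of $\GL_{n-1}(F)$ (as in the paper's Remark \ref{rem:othercharacterisation}), or use Schwartz functions and the auxiliary unipotent integrations of \cite{JP-SS81}; and for the injectivity argument one needs the whole family of such integrals, since the restriction of $W$ to $\GL_{n-1}(F)$ is right $K_{n-1}$-invariant and is only captured by its transforms against all unramified data --- vanishing of a single Mellin-type transform does not force $W$ to vanish. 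Separately, for bare existence of some level with nonzero fixed vectors you do not need the inductive construction of an essential vector: smoothness already gives a vector fixed by some $K(\pp^m) \subset K_1(\pp^m)$; the construction only becomes necessary when one wants the precise value of $c(\pi)$ and the test-vector property.
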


\begin{definition}
\label{def:newform}
The nonzero vector $v^{\circ} \in V_{\pi}^{K_1(\pp^{c(\pi)})} = V_{\pi}^{K_0(\pp^{c(\pi)}), \chi_{\pi}}$, unique up to scalar multiplication, is called the newform of $\pi$. The nonnegative integer $c(\pi)$ is called the conductor exponent of $\pi$. Elements of $V_{\pi}^{K_1(\pp^m)} = V_{\pi}^{K_0(\pp^m), \chi_{\pi}}$ for $m > c(\pi)$ are called oldforms.
\end{definition}

\begin{remark}
The proof of \cite[Th\'{e}or\`{e}me (5)]{JP-SS81} contains a gap; correct proofs were later independently given by Jacquet \cite[Theorem 1]{Jac12} and Matringe \cite[Corollary 3.3]{Mat13}.
\end{remark}

\begin{remark}
\label{rem:othercharacterisation}
As proven in \cite{JP-SS81}, there are other ways to characterise the newform and the conductor exponent of $\pi$ instead of in terms of the subspace $V_{\pi}^{K_1(\pp^{c(\pi)})}$ of $K_1(\pp^{c(\pi)})$-invariant vectors in $V_{\pi}$. The conductor exponent $c(\pi)$ is precisely the nonnegative integer for which the epsilon factor $\e(s,\pi,\psi)$ associated to $\pi$ is of the form
\[\e(s,\pi,\psi) = \e\left(\frac{1}{2},\pi,\psi\right) q^{-c(\pi) \left(s - \frac{1}{2}\right)},\]
where $\psi : F \to \C^{\times}$ is an unramified additive character of $F$. The newform is such that when viewed in the Whittaker model $\WW(\pi,\psi)$ of $\pi$, it is the unique Whittaker function $W^{\circ}$ satisfying $W^{\circ}\left(g \begin{psmallmatrix} k' & 0 \\ 0 & 1 \end{psmallmatrix}\right) = W^{\circ}(g)$ for all $g \in \GL_n(F)$ and $k' \in K_{n - 1}$ that is a test vector for the local $\GL_n \times \GL_{n - 1}$ Rankin--Selberg integral whenever the second representation is unramified, so that
\[\int\limits_{\Ngp_{n - 1}(F) \backslash \GL_{n - 1}(F)} W^{\circ} \begin{pmatrix} g & 0 \\ 0 & 1 \end{pmatrix} W^{\prime \circ}(g) \left|\det g\right|^{s - \frac{1}{2}} \, dg = L(s,\pi \times \pi')\]
for all spherical induced representations of Langlands type $\pi'$ of $\GL_{n - 1}(F)$ with spherical Whittaker function $W^{\prime\circ} \in \WW(\pi',\overline{\psi})$ normalised such that $W^{\prime\circ}(1_{n - 1}) = 1$ \cite[Th\'{e}or\`{e}me (4)]{JP-SS81} (see additionally \cite{Jac12} and \cite[Corollary 3.3]{Mat13}).
\end{remark}

\subsection{The Newform \texorpdfstring{$K$}{K}-Type}

We shall show that the newform lies in a distinguished $K$-type of $\pi$. To begin, for each irreducible smooth representation $\tau \in \widehat{K}$, we define the projection map $\Pi^{\tau} : V_{\pi} \to V_{\pi}$ by
\[\Pi^{\tau}(v) \coloneqq \int_{K} \xi^{\tau}(k) \pi(k) \cdot v \, dk,\]
where
\[\xi^{\tau}(k) \coloneqq (\dim \tau) \Tr \tau(k^{-1})\]
is the elementary idempotent associated to $\tau$. The image of $V_{\pi}$ under $\Pi^{\tau}$ is the $\tau$-isotypic subspace $V_{\pi}^{\tau}$ of $V_{\pi}$, which is finite-dimensional since $\pi$ is admissible. We say that $\tau$ is a $K$-type of $\pi$ if $\Hom_{K}(\tau, \pi|_{K})$ is nontrivial, in which case $\dim V_{\pi}^{\tau} = \dim \tau \dim \Hom_{K}(\tau, \pi|_{K}) > 0$, and we call $\dim \Hom_{K}(\tau, \pi|_{K})$ the multiplicity of $\tau$ in $\pi$.

In general, the $K$-type decomposition of an induced representation of Langlands type $\pi$ is not known except in special cases. For $n = 2$, this follows from work of Casselman \cite{Cas73b}, Silberger \cite{Sil70}, and Hansen \cite{Han87}; for $n = 3$ and $\pi$ a principal series representation, this problem has been studied by Campbell and Nevins \cite{CN09,CN10} and Onn and Singla \cite{OS14}.

We do not attempt to determine the full $K$-type decomposition of $\pi$, which is undoubtedly challenging, since no explicit description of $\widehat{K}$ currently exists for $n > 2$. Rather, we study the multiplicity with which particular representations $\tau \in \widehat{K}$ appear in the $K$-type decomposition of $\pi$, namely those representations containing a $K_{n - 1,1}$-fixed vector.

To do so, we define the projection map $\Pi^{K_{n - 1,1}} : V_{\pi} \to V_{\pi}$ given by
\[\Pi^{K_{n - 1,1}}(v) \coloneqq \int_{K_{n - 1,1}} \pi(k') \cdot v \, dk',\]
where $dk'$ denotes the Haar probability measure on the compact group $K_{n - 1,1}$, so that the image of $\Pi^{K_{n - 1,1}}$ is the subspace of $K_{n - 1,1}$-invariant vectors. The composition of the two projections $\Pi^{\tau}$ and $\Pi^{K_{n - 1,1}}$ is the projection
\begin{equation}
\label{eqn:PitauKn-11}
\left(\Pi^{\tau,K_{n - 1,1}}\right)(v) \coloneqq \left(\Pi^{K_{n - 1,1}} \circ \Pi^{\tau}\right)(v) = \left(\Pi^{\tau} \circ \Pi^{K_{n - 1,1}}\right)(v) = \int_{K} \xi^{\tau,K_{n - 1,1}}(k) \pi(k) \cdot v \, dk
\end{equation}
onto the subspace of $K_{n - 1,1}$-invariant $\tau$-isotypic vectors
\[V_{\pi}^{\tau,K_{n - 1,1}} \coloneqq \left(\Pi^{\tau,K_{n - 1,1}}\right)(V_{\pi}) = \left\{v \in V_{\pi}^{\tau} : \pi(k') \cdot v = v \text{ for all $k' \in K_{n - 1,1}$}\right\}.\]
Here
\begin{equation}
\label{eqn:xitauKn-11}
\xi^{\tau,K_{n - 1,1}}(k) \coloneqq \int_{K_{n - 1,1}} \xi^{\tau}(k'k) \, dk' = \int_{K_{n - 1,1}} \xi^{\tau}(kk') \, dk'.
\end{equation}

Finally, for any nonnegative integer $m$, we set
\begin{align*}
V_{\pi}(m) & \coloneqq \bigoplus_{\substack{\tau \in \widehat{K} \\ c(\tau) = m}} V_{\pi}^{\tau},	\\
V_{\pi}(m)^{K_{n - 1,1}} & \coloneqq \bigoplus_{\substack{\tau \in \widehat{K} \\ c(\tau) = m}} V_{\pi}^{\tau,K_{n - 1,1}}.
\end{align*}
Here the level $c(\tau)$ of $\tau$ is defined to be the minimal nonnegative integer $m$ for which the kernel of $\tau$ contains $K(\pp^m)$, which is necessarily finite since $\tau$ is smooth. Thus $V_{\pi}(m)^{K_{n - 1,1}}$ is the subspace of $K_{n - 1,1}$-invariant vectors that are linear combinations of $\tau$-isotypic vectors for some $\tau \in \widehat{K}$ of level $c(\tau) = m$. We now show that the newform and the conductor exponent may be characterised in terms of $V_{\pi}(m)^{K_{n - 1,1}}$.

\begin{theorem}
\label{thm:newform}
Let $(\pi,V_{\pi})$ be an induced representation of Langlands type of $\GL_n(F)$. For any nonnegative integer $m$, we have that
\begin{equation}
\label{eqn:dimVKn-11}
\dim V_{\pi}(m)^{K_{n - 1,1}} = \begin{dcases*}
\binom{m - c(\pi) + n - 2}{n - 2} & if $m \geq c(\pi)$,	\\
0 & otherwise.
\end{dcases*}
\end{equation}
In particular, the minimal nonnegative integer $m$ for which $V_{\pi}(m)^{K_{n - 1,1}}$ is nontrivial is $m = c(\pi)$. Furthermore, we have that
\begin{equation}
\label{eqn:VKn-11}
V_{\pi}(m)^{K_{n - 1,1}} = \begin{dcases*}
V_{\pi}^{\tau_{\chi_{\pi},m},K_{n - 1,1}} & if $m \geq c(\pi)$,	\\
\{0\} & otherwise,
\end{dcases*}
\end{equation}
and that
\begin{equation}
\label{eqn:VpiK1mtoVpitaum}
V_{\pi}^{K_1(\pp^m)} = V_{\pi}^{K_0(\pp^m), \chi_{\pi}} = \bigoplus_{\ell = 0}^{m} V_{\pi}(\ell)^{K_{n - 1,1}} = \begin{dcases*}
\bigoplus_{\ell = c(\pi)}^{m} V_{\pi}^{\tau_{\chi_{\pi},\ell},K_{n - 1,1}} & if $m \geq c(\pi)$,	\\
\{0\} & otherwise.
\end{dcases*}
\end{equation}
In particular,
\begin{equation}
\label{eqn:VpiK1cpitoVpitaucpi}
V_{\pi}^{K_1(\pp^{c(\pi)})} = V_{\pi}^{K_0(\pp^{c(\pi)}), \chi_{\pi}} = V_{\pi}(c(\pi))^{K_{n - 1,1}} = V_{\pi}^{\tau_{\chi_{\pi},c(\pi)},K_{n - 1,1}}.
\end{equation}
\end{theorem}

\begin{definition}
We call $\tau_{\chi_{\pi},c(\pi)}$ the newform $K$-type of $\pi$.
\end{definition}

\begin{remark}
\label{rem:newformdef}
This gives alternative characterisations of the newform and conductor exponent of $\pi$: we may \emph{define} the conductor exponent $c(\pi)$ of $\pi$ to be the minimal nonnegative integer $m$ for which the space $V_{\pi}(m)^{K_{n - 1,1}} = V_{\pi}^{\tau_{\chi_{\pi},m},K_{n - 1,1}}$ is nontrivial, while we may \emph{define} the newform to be the nonzero vector, unique up to scalar multiplication, lying in $V_{\pi}(c(\pi))^{K_{n - 1,1}} = V_{\pi}^{\tau_{\chi_{\pi},c(\pi)},K_{n - 1,1}}$. Of course, this definition is somewhat circular in practice, since, as we shall shortly see, we use \hyperref[thm:JP-SS]{Theorem \ref*{thm:JP-SS}} (or rather a consequence thereof) in order to prove \hyperref[thm:newform]{Theorem \ref*{thm:newform}}.
\end{remark}

\hyperref[thm:newform]{Theorem \ref*{thm:newform}} gives additional information about the newform not immediately apparent from \hyperref[thm:JP-SS]{Theorem \ref*{thm:JP-SS}}: not only is the newform the unique vector, up to scalar multiplication, that is $K_1(\pp^{c(\pi)})$-invariant (or, equivalently, $(K_0(\pp^{c(\pi)}),\psi_{\chi_{\pi}})$-equivariant), it is also the unique vector, up to scalar multiplication, that is $K_{n - 1,1}$-invariant and $\tau_{\chi_{\pi},c(\pi)}$-isotypic; moreover, there are no nontrivial $K_{n - 1,1}$-invariant $\tau_{\chi,m}$-isotypic vectors with $m < c(\pi)$ or $\chi \neq \chi_{\pi}$. We have also shown that the space $V_{\pi}^{K_1(\pp^m)}$ of oldforms of level $m \geq c(\pi)$ decomposes into the direct sum of the subspaces of $K_{n - 1,1}$-invariant $\tau_{\chi_{\pi},\ell}$-isotypic vectors in $V_{\pi}$ with $\ell \in \{c(\pi),\ldots,m\}$, each of which has dimension $\binom{\ell - c(\pi) + n - 2}{n - 2}$ respectively.

Before we prove \hyperref[thm:newform]{Theorem \ref*{thm:newform}}, we return to the projection $\Pi^{\tau,K_{n - 1,1}}$ defined in \eqref{eqn:PitauKn-11} in terms of integration against the function $\xi^{\tau,K_{n - 1,1}}$ as in \eqref{eqn:xitauKn-11}. This function has a particularly simple description.

\begin{lemma}
\label{lem:xitauKn-11}
For $\tau \in \widehat{K}$ and $k \in K$, we have that
\[\xi^{\tau,K_{n - 1,1}}(k) = \begin{dcases*}
(\dim \tau_{\chi,m}) P_{\chi,m}^{\circ}(e_n k^{-1}) & if $\tau = \tau_{\chi,m}$ for some $\chi \in \widehat{\OO^{\times}}$ and $m \geq c(\chi)$,	\\
0 & otherwise.
\end{dcases*}\]
\end{lemma}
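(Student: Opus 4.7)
The plan is to unfold the definition of $\xi^{\tau|_{K_{n-1,1}}}$, rewrite the inner integral as a projection onto $K_{n-1,1}$-fixed vectors in $\tau$, and then identify this projection explicitly via the zonal spherical function and the reproducing kernel formula from Corollary \ref{cor:reproducing}.

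First I would write
\[
\xi^{\tau|_{K_{n-1,1}}}(k) \;=\; \dim \tau \int_{K_{n-1,1}} \Tr \tau((k'k)^{-1}) \, dk' \;=\; \dim \tau \cdot \Tr\!\left(\tau(k^{-1}) \circ \Pi^{K_{n-1,1}}_{\tau}\right),
\]
where
\[
\Pi^{K_{n-1,1}}_{\tau} \;\coloneqq\; \int_{K_{n-1,1}} \tau(k'^{-1}) \, dk' \;=\; \int_{K_{n-1,1}} \tau(k') \, dk'
\]
(the two integrals agree after the change of variables $k'\mapsto k'^{-1}$, using that $K_{n-1,1}$ is unimodular and $dk'$ is Haar probability measure). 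Standard representation theory of compact groups identifies $\Pi^{K_{n-1,1}}_{\tau}$ with the orthogonal projector from the representation space of $\tau$ onto its subspace of $K_{n-1,1}$-fixed vectors.

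By Corollary \ref{cor:onedim}, this subspace is $\{0\}$ unless $\tau \cong \tau_{\chi,m}$ for some $\chi \in \widehat{\OO^{\times}}$ and $m \geq c(\chi)$, immediately giving the vanishing case. In the remaining case I realise $\tau_{\chi,m}$ on $\HH_{\chi,m}(S^{n - 1})$; the one-dimensional space of $K_{n-1,1}$-fixed vectors is spanned by $P_{\chi,m}^{\circ}$, and combining this with $\langle P_{\chi,m}^{\circ}, P_{\chi,m}^{\circ}\rangle = 1/\dim \HH_{\chi,m}(S^{n - 1})$ from \eqref{eq:PchimcircL2} yields
\[
\Pi^{K_{n-1,1}}_{\tau_{\chi,m}}(v) \;=\; \dim \HH_{\chi,m}(S^{n - 1}) \, \langle v, P_{\chi,m}^{\circ}\rangle \, P_{\chi,m}^{\circ}.
\]
This is a rank-one operator, so $\Tr(\tau_{\chi,m}(k^{-1}) \circ \Pi^{K_{n-1,1}}_{\tau_{\chi,m}}) = \dim \HH_{\chi,m}(S^{n - 1}) \, \langle \tau_{\chi,m}(k^{-1}) \cdot P_{\chi,m}^{\circ}, P_{\chi,m}^{\circ}\rangle$. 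Applying the reproducing kernel formula \eqref{eq:reproducing} with $P = P_{\chi,m}^{\circ}$ and $k$ replaced by $k^{-1}$ collapses the right-hand side to $P_{\chi,m}^{\circ}(e_n k^{-1})$, and multiplying by $\dim \tau_{\chi,m}$ gives the claimed formula.

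The main subtlety is not computational but organisational: keeping track of which side the inverse lands on, and noticing that the identity $\int_{K_{n-1,1}} \tau(k'^{-1})\,dk' = \int_{K_{n-1,1}} \tau(k')\,dk'$ is what allows the averaging to be interpreted as a projection onto fixed (rather than some twisted) vectors. Once that is in place, everything else is a direct invocation of Corollary \ref{cor:onedim}, formula \eqref{eq:PchimcircL2}, and the reproducing kernel identity \eqref{eq:reproducing}.
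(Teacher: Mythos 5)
Your proof is correct and is essentially the paper's own argument: the paper simply cites Corollaries \ref{cor:onedim} and \ref{cor:reproducing}, and your write-up fills in exactly that reasoning (averaging over $K_{n-1,1}$ gives the projection onto the fixed subspace, which vanishes unless $\tau \cong \tau_{\chi,m}$, and the trace of the resulting rank-one operator is evaluated via \eqref{eq:PchimcircL2} and \eqref{eq:reproducing}). No gaps; nothing further is needed.
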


\begin{proof}
This follows from \hyperref[cor:onedim]{Corollaries \ref*{cor:onedim}} and \ref{cor:reproducing}.
\end{proof}

From \hyperref[lem:xitauKn-11]{Lemma \ref*{lem:xitauKn-11}}, we have the following simple consequences.

\begin{corollary}
We have that
\begin{align}
\label{eqn:xitoK1}
\sum_{\ell = 0}^{m} \sum_{\substack{\chi \in \widehat{\OO^{\times}} \\ 0 \leq c(\chi) \leq \ell}} \xi^{\tau_{\chi,\ell},K_{n - 1,1}}(k) & = \begin{dcases*}
\frac{1}{\vol(K_1(\pp^m))} & if $k \in K_1(\pp^m)$,	\\
0 & otherwise,
\end{dcases*}	\\
\label{eqn:xitoK0}
\sum_{\ell = c(\chi)}^{m} \xi^{\tau_{\chi,\ell},K_{n - 1,1}}(k) & = \begin{dcases*}
1 & if $m = c(\chi) = 0$ and $k \in K$,	\\
\frac{\overline{\chi}(e_n k \prescript{t}{}{e}_n)}{\vol(K_0(\pp^m))} & if $m > 0$ and $k \in K_0(\pp^m)$,	\\
0 & otherwise.
\end{dcases*}
\end{align}
Consequently, for an induced representation of Langlands type $(\pi,V_{\pi})$ of $\GL_n(F)$, we have that
\begin{align}
\label{eqn:PiK1tau}
\Pi^{K_1(\pp^m)} & = \sum_{\ell = 0}^{m} \sum_{\substack{\chi \in \widehat{\OO^{\times}} \\ 0 \leq c(\chi) \leq \ell}} \Pi^{\tau_{\chi,\ell},K_{n - 1,1}},	\\
\label{eqn:PiK0tau}
\Pi^{K_0(\pp^m), \chi} & = \sum_{\ell = c(\chi)}^{m} \Pi^{\tau_{\chi,\ell},K_{n - 1,1}}
\end{align}
for any nonnegative integer $m$ and any character $\chi \in \widehat{\OO^{\times}}$ for which $0 \leq c(\chi) \leq m$.
\end{corollary}

\begin{proof}
The identity \eqref{eqn:xitoK0} can be seen by combining \hyperref[prop:Pcirc]{Proposition \ref*{prop:Pcirc}} and \hyperref[lem:dimH]{Lemmata \ref*{lem:dimH}} and \ref{lem:xitauKn-11}, noting that $1/\vol(K_0(\pp^m)) = [K : K_0(\pp^m)]$. The identity \eqref{eqn:xitoK1} follows from \eqref{eqn:xitoK0} together with character orthogonality, namely the fact that for $x \in \OO^{\times}$,
\[\sum_{\substack{\chi \in \widehat{\OO^{\times}} \\ 0 \leq c(\chi) \leq m}} \chi(x) = \begin{dcases*}
\# \OO^{\times} / (1 + \pp^m) & if $x - 1 \in \pp^m$,	\\
0 & otherwise,
\end{dcases*}\]
and noting that $\# \OO^{\times} / (1 + \pp^m) = [K_0(\pp^m) : K_1(\pp^m)]$. The identities \eqref{eqn:PiK1tau} and \eqref{eqn:PiK0tau} for the projections $\Pi^{K_1(\pp^m)}$ and $\Pi^{K_0(\pp^m), \chi}$ in \eqref{eqn:PiK1} and \eqref{eqn:PiK0} in terms of the projections $\Pi^{\tau_{\chi,\ell},K_{n - 1,1}}$ in \eqref{eqn:PitauKn-11} are immediate consequences of \eqref{eqn:xitoK1} and \eqref{eqn:xitoK0}.
\end{proof}

\begin{proof}[Proof of {\hyperref[thm:newform]{Theorem \ref*{thm:newform}}}]
We first note that $V_{\pi}^{\tau_{\chi_1,m_1},K_{n - 1,1}}$ and $V_{\pi}^{\tau_{\chi_2,m_2},K_{n - 1,1}}$ are mutually orthogonal whenever either $\chi_1 \neq \chi_2$ or $m_1 \neq m_2$. In conjunction with \eqref{eqn:VpiK0toK1} and \eqref{eqn:PiK0tau}, this implies that
\[V_{\pi}^{K_1(\pp^m)} = V_{\pi}^{K_0(\pp^m), \chi_{\pi}} = \begin{dcases*}
\bigoplus_{\ell = c(\chi_{\pi})}^{m} V_{\pi}^{\tau_{\chi_{\pi},\ell},K_{n - 1,1}} & if $m \geq c(\chi_{\pi})$,	\\
\{0\} & otherwise.
\end{dcases*}\]
Since
\[V_{\pi}^{K_1(\pp^m)} = V_{\pi}^{K_0(\pp^m), \chi_{\pi}} = \{0\} \ \text{ whenever $m < c(\pi)$}\]
from \hyperref[thm:JP-SS]{Theorem \ref*{thm:JP-SS}}, we deduce that
\begin{equation}
\label{eqn:Vpitau=0}
V_{\pi}^{\tau_{\chi_{\pi},m},K_{n - 1,1}} = \{0\} \ \text{ whenever $c(\chi_{\pi}) \leq m < c(\pi)$},
\end{equation}
noting that $c(\chi_{\pi}) = c(\omega_{\pi}) \leq c(\pi)$, from which \eqref{eqn:VpiK1mtoVpitaum} and \eqref{eqn:VpiK1cpitoVpitaucpi} both follow.

Next, we have from \hyperref[lem:xitauKn-11]{Lemma \ref*{lem:xitauKn-11}} and the fact that $\pi|_{\Zgp(\OO)} = \chi_{\pi}$ that
\[V_{\pi}(m)^{K_{n - 1,1}} = \begin{dcases*}
V_{\pi}^{\tau_{\chi_{\pi},m},K_{n - 1,1}} & if $m \geq c(\chi_{\pi})$,	\\
\{0\} & otherwise.
\end{dcases*}\]
Together with \eqref{eqn:Vpitau=0}, we deduce \eqref{eqn:VKn-11}.

Finally, from \cite[Theorem 1]{Ree91} (which in turn relies on \hyperref[thm:JP-SS]{Theorem \ref*{thm:JP-SS}}), we have that
\[\dim V_{\pi}^{K_1(\pp^m)} = \dim V_{\pi}^{K_0(\pp^m), \chi_{\pi}} = \begin{dcases*}
\binom{m - c(\pi) + n - 1}{n - 1} & if $m \geq c(\pi)$,	\\
0 & otherwise.
\end{dcases*}\]
The identity \eqref{eqn:dimVKn-11} then follows from \eqref{eqn:VKn-11}, \eqref{eqn:VpiK1mtoVpitaum}, and induction together with the combinatorial identity
\begin{equation}
\label{eqn:binom}
\sum_{\ell = c(\pi)}^{m} \binom{\ell - c(\pi) + n - 2}{n - 2} = \binom{m - c(\pi) + n - 1}{n - 1}.
\qedhere
\end{equation}
\end{proof}

We may use \hyperref[thm:newform]{Theorem \ref*{thm:newform}} to tell us the multiplicity with which an irreducible smooth representations $\tau_{\chi,m} \in \widehat{K}$ occurs in the restriction of $\pi$ to $K$; when $\chi_{\pi}$ is trivial, this was observed by Reeder \cite{Ree94} to follow upon combining results from \cite{Hil94} and \cite{Ree91}.

\begin{proposition}
\label{prop:newform}
Let $(\pi,V_{\pi})$ be an induced representation of Langlands type of $\GL_n(F)$. For any nonnegative integer $m$ and any character $\chi \in \widehat{\OO^{\times}}$ for which $0 \leq c(\chi) \leq m$, we have that
\[\dim \Hom_{K}\left(\tau_{\chi,m},\pi|_{K}\right) = \begin{dcases*}
\binom{m - c(\pi) + n - 2}{n - 2} & if $m \geq c(\pi)$ and $\chi = \chi_{\pi}$,	\\
0 & otherwise.
\end{dcases*}\]
\end{proposition}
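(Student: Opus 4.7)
The plan is to leverage Theorem \ref{thm:newform} together with the one-dimensionality of the $K_{n-1,1}$-fixed subspace of $\tau_{\chi,m}$ (Corollary \ref{cor:onedim}) via a standard ``invariants of an isotypic component'' argument. Specifically, for an admissible representation $(\pi,V_\pi)$ and any $\tau\in\widehat K$, the $\tau$-isotypic subspace $V_\pi^\tau$ splits as a direct sum of $\dim\Hom_K(\tau,\pi|_K)$ copies of the underlying space of $\tau$; taking $K_{n-1,1}$-invariants of each summand and using Corollary \ref{cor:onedim} to conclude that each contributes exactly one dimension when $\tau=\tau_{\chi,m}$, I obtain the key identity
\[
\dim V_\pi^{\tau_{\chi,m}|_{K_{n-1,1}}}=\dim\Hom_K(\tau_{\chi,m},\pi|_K)\cdot\dim \HH_{\chi,m}(S^{n-1})^{K_{n-1,1}}=\dim\Hom_K(\tau_{\chi,m},\pi|_K).
\]

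With this in hand, the proof proceeds by cases. When $\chi=\chi_\pi$ and $m\geq c(\pi)$, the identity \eqref{eq:VKn-11} of Theorem \ref{thm:newform} gives $V_\pi^{K_{n-1,1}(m)}=V_\pi^{\tau_{\chi_\pi,m}|_{K_{n-1,1}}}$, whose dimension is $\binom{m-c(\pi)+n-2}{n-2}$ by \eqref{eq:dimVKn-11}, giving the claimed multiplicity immediately. When $\chi=\chi_\pi$ and $m<c(\pi)$, the same theorem yields $V_\pi^{\tau_{\chi_\pi,m}|_{K_{n-1,1}}}=\{0\}$, forcing the multiplicity to vanish.

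It remains to treat $\chi\neq\chi_\pi$ with $c(\chi)\leq m$. Here two routes are available. First, a direct central-character argument: since $\tau_{\chi,m}$ has central character $\chi$ on $\Zgp(\OO)$ while $\pi$ restricted to $\Zgp(\OO)$ acts by $\chi_\pi$, any $\tau_{\chi,m}$-isotypic vector in $V_\pi$ must satisfy $\chi(z)v=\chi_\pi(z)v$ for all $z\in\Zgp(\OO)$, forcing $v=0$ when $\chi\neq\chi_\pi$. Alternatively, and perhaps more in keeping with the spirit of the paper, one can observe that by definition $V_\pi^{K_{n-1,1}(m)}=\bigoplus_{\chi'\colon c(\chi')\leq m}V_\pi^{\tau_{\chi',m}|_{K_{n-1,1}}}$, and \eqref{eq:VKn-11} identifies this direct sum with the single summand indexed by $\chi_\pi$ (or with $\{0\}$ if $m<c(\pi)$), so every summand with $\chi'\neq\chi_\pi$ vanishes.

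The main ``obstacle'' is essentially bookkeeping; no deep new input is needed beyond Theorem \ref{thm:newform} and the Gelfand-pair observation implicit in Corollary \ref{cor:onedim}. The only point that deserves care is the opening identity relating $\dim V_\pi^{\tau|_{K_{n-1,1}}}$ to $\dim\Hom_K(\tau,\pi|_K)$; this is a routine consequence of decomposing $V_\pi^\tau\cong\HH_{\chi,m}(S^{n-1})^{\oplus\dim\Hom_K(\tau,\pi|_K)}$ as $K$-modules and then taking $K_{n-1,1}$-invariants, but it is worth stating explicitly so that the transition from Theorem \ref{thm:newform} to the multiplicity statement is transparent.
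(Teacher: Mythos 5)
Your argument is correct, but it is not the proof the paper gives: the paper explicitly sets aside the direct deduction from Theorem \ref{thm:newform} and instead argues via Frobenius reciprocity, using \eqref{eq:HtoInd} and \eqref{eq:K0toK1ind} to identify $\bigoplus_{\ell}\Hom_{K}(\tau_{\chi_{\pi},\ell},\pi|_{K})$ with $\Hom_{K_1(\pp^m)}(1,\pi|_{K_1(\pp^m)})$, then invoking Reeder's oldform dimension formula and inducting with \eqref{eq:binom}. Your route is precisely the ``direct'' one the paper alludes to: the bridge identity $\dim V_{\pi}^{\tau_{\chi,m}|_{K_{n-1,1}}}=\dim\Hom_{K}(\tau_{\chi,m},\pi|_{K})\cdot\dim\HH_{\chi,m}(S^{n-1})^{K_{n-1,1}}$ is valid (semisimplicity of $\pi|_{K}$ plus admissibility, and Corollary \ref{cor:onedim} for multiplicity one of the $K_{n-1,1}$-fixed line), the case $\chi\neq\chi_{\pi}$ is correctly killed by the central character, and the remaining cases are read off from \eqref{eq:dimVKn-11} and \eqref{eq:VKn-11}; note also that your identification of $V_{\pi}^{K_{n-1,1}(m)}$ with $\bigoplus_{\chi'}V_{\pi}^{\tau_{\chi',m}|_{K_{n-1,1}}}$ implicitly uses that $\tau_{\chi',m}$ has level exactly $m$, which is consistent with the paper's setup. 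The trade-off: your proof is shorter but inherits everything from Theorem \ref{thm:newform} (whose proof already rests on Reeder's theorem and the binomial induction), whereas the paper's Frobenius-reciprocity argument bypasses the isotypic bookkeeping and the finer statement \eqref{eq:VKn-11}, needing only the induced-representation decompositions and Reeder's result, so it serves as a genuinely independent check of the multiplicity formula.
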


While this can be proved directly using \hyperref[thm:newform]{Theorem \ref*{thm:newform}}, we give an alternate proof via Frobenius reciprocity.

\begin{proof}
It is clear that $\Hom_{K}(\tau_{\chi,m},\pi|_{K})$ is trivial if $\chi \neq \chi_{\pi}$. For $\chi = \chi_{\pi}$,
by \eqref{eqn:HtoInd} and Frobenius reciprocity, we have that
\[\bigoplus_{\ell = c(\chi_{\pi})}^{m} \Hom_{K}\left(\tau_{\chi_{\pi},\ell}, \pi|_{K}\right) \cong \Hom_{K}\left(\Ind_{K_0(\pp^m)}^{K} \psi_{\chi_{\pi}}, \pi|_{K}\right) \cong \Hom_{K_0(\pp^m)}\left(\psi_{\chi_{\pi}}, \pi|_{K_0(\pp^m)}\right).\]
Since $\Hom_{K_0(\pp^m)}(\psi_{\chi}, \pi|_{K_0(\pp^m)})$ is trivial whenever $\chi \neq \chi_{\pi}$, this in turn is isomorphic to
\begin{align*}
\bigoplus_{\substack{\chi \in \widehat{\OO^{\times}} \\ 0 \leq c(\chi) \leq m}} \Hom_{K_0(\pp^m)}\left(\psi_{\chi}, \pi|_{K_0(\pp^m)}\right) & \cong \Hom_{K_0(\pp^m)}\left(\Ind_{K_1(\pp^m)}^{K_0(\pp^m)} 1,\pi|_{K_0(\pp^m)}\right)	\\
& \cong \Hom_{K_1(\pp^m)}\left(1, \pi|_{K_1(\pp^m)}\right)
\end{align*}
by \eqref{eqn:K0toK1ind} and Frobenius reciprocity. By \cite[Theorem 1]{Ree91}, we have that
\[\dim \Hom_{K_1(\pp^m)}\left(1, \pi|_{K_1(\pp^m)}\right) = \begin{dcases*}
\binom{m - c(\pi) + n - 1}{n - 1} & if $m \geq c(\pi)$,	\\
0 & otherwise.
\end{dcases*}\]
It follows that
\[\sum_{\ell = c(\chi_{\pi})}^{m} \dim \Hom_{K}\left(\tau_{\chi_{\pi},\ell}, \pi|_{K}\right) = \begin{dcases*}
\binom{m - c(\pi) + n - 1}{n - 1} & if $m \geq c(\pi)$,	\\
0 & otherwise,
\end{dcases*}\]
from which the result via induction in conjunction with the identity \eqref{eqn:binom}.
\end{proof}

\subsection{Matrix Coefficients}

We now study the properties of certain matrix coefficients of $\pi$ via matrix coefficients of $\HH_{\chi_{\pi},c(\pi)}(S^{n - 1})$. In order to do so, we explicitly identity $V_{\pi}^{\tau_{\chi_{\pi},c(\pi)}}$ with $\HH_{\chi_{\pi},c(\pi)}(S^{n - 1})$. We first observe an identity for the newform.

\begin{lemma}
The newform $v^{\circ} \in V_{\pi}^{\tau_{\chi_{\pi},c(\pi)}}$ of an induced representation of Langlands type $(\pi,V_{\pi})$ satisfies
\begin{equation}
\label{eqn:vcirc}
v^{\circ} = \dim \tau_{\chi_{\pi},c(\pi)} \int_{K} P_{\chi_{\pi},c(\pi)}^{\circ}(e_n k^{-1}) \pi(k) \cdot v^{\circ} \, dk.
\end{equation}
\end{lemma}

\begin{proof}
This is an immediate consequence of \hyperref[lem:xitauKn-11]{Lemma \ref*{lem:xitauKn-11}} and \hyperref[thm:newform]{Theorem \ref*{thm:newform}}.
\end{proof}

The finite-dimensional vector space $V_{\pi}^{\tau_{\chi_{\pi},c(\pi)}}$ admits a $K$-invariant inner product $\langle \cdot, \cdot \rangle$. We use such an inner product to construct an isomorphism between $\HH_{\chi_{\pi},c(\pi)}(S^{n - 1})$ and $V_{\pi}^{\tau_{\chi_{\pi},c(\pi)}}$.

\begin{proposition}
Let $(\pi,V_{\pi})$ be an induced representation of Langlands type with newform $v^{\circ} \in V_{\pi}^{\tau_{\chi_{\pi},c(\pi)}}$. An explicit isomorphism $P \mapsto v$ between $\HH_{\chi_{\pi},c(\pi)}(S^{n - 1})$ and $V_{\pi}^{\tau_{\chi_{\pi},c(\pi)}}$ is given by
\begin{equation}
\label{eqn:vdef}
v = \dim \tau_{\chi_{\pi},c(\pi)} \int_{K} P(e_n k^{-1}) \pi(k) \cdot v^{\circ} \, dk.
\end{equation}
For nonzero $v \in V_{\pi}^{\tau_{\chi_{\pi},c(\pi)}}$, the inverse is given by
\begin{equation}
\label{eqn:Pdef}
P(x) = \left(\dim \tau_{\chi_{\pi},c(\pi)}\right)^2 \langle P, P\rangle \int_{K} \frac{\left\langle \pi(k^{-1}) \cdot v, v^{\circ} \right\rangle}{\langle v, v\rangle} \left(\tau_{\chi_{\pi},c(\pi)}(k) \cdot P_{\chi_{\pi},c(\pi)}^{\circ}\right)(x) \, dk.
\end{equation}
Moreover, for all $P \in \HH_{\chi_{\pi},c(\pi)}(S^{n - 1})$ and $v \in V_{\pi}^{\tau_{\chi_{\pi},c(\pi)}}$ that are associated via \eqref{eqn:vdef} and \eqref{eqn:Pdef}, we have that
\begin{equation}
\label{eqn:matrixcoeff}
\langle P, P\rangle \left\langle \pi(k) \cdot v^{\circ}, v\right\rangle = \frac{\langle v, v\rangle \overline{P}(e_n k^{-1})}{\dim \tau_{\chi_{\pi},c(\pi)}}.
\end{equation}
\end{proposition}

\begin{proof}
We first confirm that for $v$ as in \eqref{eqn:vdef}, we have that $\Pi^{\tau_{\chi_{\pi},c(\pi)}}(v) = v$, so that $v \in V_{\pi}^{\tau_{\chi_{\pi},c(\pi)}}$. Indeed, from \eqref{eqn:vdef},
\begin{align*}
\Pi^{\tau_{\chi_{\pi},c(\pi)}}(v) & = \int_{K} \xi^{\tau_{\chi_{\pi},c(\pi)}}(k_1) \pi(k_1) \cdot v \, dk_1	\\
& = \dim \tau_{\chi_{\pi},c(\pi)} \int_{K} \xi^{\tau_{\chi_{\pi},c(\pi)}}(k_1) \int_{K} P(e_n k_2^{-1}) \pi(k_1 k_2) \cdot v^{\circ} \, dk_2 \, dk_1	\\
& = \dim \tau_{\chi_{\pi},c(\pi)} \int_{K} \pi(k_2) \cdot v^{\circ} \int_{K} \xi^{\tau_{\chi_{\pi},c(\pi)}}(k_1) \left(\tau_{\chi_{\pi},c(\pi)}(k_1) \cdot P\right)(e_n k_2^{-1}) \, dk_1 \, dk_2	\\
& = \dim \tau_{\chi_{\pi},c(\pi)} \int_{K} P(e_n k^{-1}) \pi(k) \cdot v^{\circ} \, dk	\\
& = v.
\end{align*}

Next, we show that \eqref{eqn:Pdef} follows from \eqref{eqn:matrixcoeff}. By \eqref{eqn:Qexpansion}, \eqref{eqn:reproducing}, \eqref{eqn:PchimcircL2}, and Schur orthogonality, we have that for any orthonormal basis $\{Q_j\}$ of $\HH_{\chi_{\pi},c(\pi)}(S^{n - 1})$,
\begin{align*}
P(x) & = \sum_{j = 1}^{\dim \HH_{\chi_{\pi},c(\pi)}(S^{n - 1})} \left\langle P, Q_j\right\rangle Q_j(x)	\\
& = \sum_{j = 1}^{\dim \HH_{\chi_{\pi},c(\pi)}(S^{n - 1})} Q_j(x) \left(\dim \tau_{\chi_{\pi},c(\pi)}\right)^2	\\
& \hspace{2cm} \times \int_{K} \left\langle \tau_{\chi_{\pi},c(\pi)}(k) \cdot P_{\chi_{\pi},c(\pi)}^{\circ}, Q_j\right\rangle \left\langle \tau_{\chi_{\pi},c(\pi)}(k^{-1}) \cdot P, P_{\chi_{\pi},c(\pi)}^{\circ}\right\rangle \, dk	\\
& = \dim \tau_{\chi_{\pi},c(\pi)} \int_{K} P_{\chi_{\pi},c(\pi)}^{\circ}(xk) P(e_n k^{-1}) \, dk.
\end{align*}
Assuming \eqref{eqn:matrixcoeff}, this is precisely \eqref{eqn:Pdef}.

It remains to prove \eqref{eqn:matrixcoeff}. We first note that
\begin{equation}
\label{eqn:vdef2}
v = \left(\dim \tau_{\chi_{\pi},c(\pi)}\right)^2 \int_{K} \left\langle \tau_{\chi_{\pi},c(\pi)}(k^{-1}) \cdot P, P_{\chi_{\pi},c(\pi)}^{\circ} \right\rangle \pi(k) \cdot v^{\circ} \, dk
\end{equation}
via \eqref{eqn:reproducing} and \eqref{eqn:vdef}. By \eqref{eqn:reproducing}, \eqref{eqn:vdef}, \eqref{eqn:vdef2}, and Schur orthogonality, we therefore have that
\begin{align*}
\langle P, P\rangle v & = \left(\dim \tau_{\chi_{\pi},c(\pi)}\right)^2 \langle P, P\rangle \int_{K} \left\langle \tau_{\chi_{\pi},c(\pi)}(k^{-1}) \cdot P, P_{\chi_{\pi},c(\pi)}^{\circ}\right\rangle \pi(k) \cdot v^{\circ} \, dk	\\
& = \left(\dim \tau_{\chi_{\pi},c(\pi)}\right)^3 \int_{K} \pi(k_1) \cdot v^{\circ}	\\
& \qquad \times \int_{K} \left\langle \tau_{\chi_{\pi},c(\pi)}(k_2) \cdot \left(\tau_{\chi_{\pi},c(\pi)}(k_1^{-1}) \cdot P\right), P\right\rangle \left\langle \tau_{\chi_{\pi},c(\pi)}(k_2^{-1}) \cdot P, P_{\chi_{\pi},c(\pi)}^{\circ}\right\rangle \, dk_2 \, dk_1	\\
& = \left(\dim \tau_{\chi_{\pi},c(\pi)}\right)^3 \int_{K} \left\langle \tau_{\chi_{\pi},c(\pi)}(k_1^{-1}) \cdot P, P\right\rangle	\\
& \qquad \times \int_{K} \left\langle \tau_{\chi_{\pi},c(\pi)}(k_2^{-1}) \cdot P, P_{\chi_{\pi},c(\pi)}^{\circ}\right\rangle \pi(k_1 k_2) \cdot v^{\circ} \, dk_2 \, dk_1	\\
& = \dim \tau_{\chi_{\pi},c(\pi)} \int_{K} \left\langle \tau_{\chi_{\pi},c(\pi)}(k^{-1}) \cdot P, P\right\rangle \pi(k) \cdot v \, dk.
\end{align*}
We also have from \eqref{eqn:vdef} that
\begin{equation}
\label{eqn:vv}
\langle v, v\rangle = \dim \tau_{\chi_{\pi},c(\pi)} \int_{K} \left\langle \pi(k) \cdot v^{\circ}, v\right\rangle P(e_n k^{-1}) \, dk.
\end{equation}
We therefore have by \eqref{eqn:reproducing}, \eqref{eqn:vcirc}, \eqref{eqn:vdef2}, \eqref{eqn:vv}, and Schur orthogonality that
\begin{align*}
\langle P, P\rangle \left\langle \pi(k) \cdot v^{\circ}, v\right\rangle & = \left(\dim \tau_{\chi_{\pi},c(\pi)}\right)^3 \int_{K} \int_{K} \left\langle \tau_{\chi_{\pi},c(\pi)}(k_1^{-1}) \cdot P_{\chi_{\pi},c(\pi)}^{\circ}, P_{\chi_{\pi},c(\pi)}^{\circ}\right\rangle\left\langle \tau_{\chi_{\pi},c(\pi)}(k_2) \cdot P, P\right\rangle	\\
& \hspace{4cm} \times \left\langle \pi(k k_1) \cdot v^{\circ}, \pi(k_2) \cdot v\right\rangle \, dk_1 \, dk_2	\\
& = \left(\dim \tau_{\chi_{\pi},c(\pi)}\right)^3 \int_{K} \left\langle \pi(k_1) \cdot v^{\circ}, v\right\rangle \int_{K} \left\langle \tau_{\chi_{\pi},c(\pi)}(k_2) \cdot P, \tau_{\chi_{\pi},c(\pi)}(k^{-1}) \cdot P\right\rangle	\\
& \hspace{1cm} \times \left\langle \tau_{\chi_{\pi},c(\pi)}(k_2^{-1}) \cdot P_{\chi_{\pi},c(\pi)}^{\circ}, \tau_{\chi_{\pi},c(\pi)}(k_1) \cdot P_{\chi_{\pi},c(\pi)}^{\circ}\right\rangle \, dk_2 \, dk_1	\\
& = \left(\dim \tau_{\chi_{\pi},c(\pi)}\right)^2 \left\langle P_{\chi_{\pi},c(\pi)}^{\circ}, \tau_{\chi_{\pi},c(\pi)}(k^{-1}) \cdot P\right\rangle	\\
& \hspace{3cm} \times \int_{K} \left\langle \pi(k_1) \cdot v^{\circ}, v\right\rangle \left\langle \tau_{\chi_{\pi},c(\pi)}(k_1^{-1}) \cdot P, P_{\chi_{\pi},c(\pi)}^{\circ}\right\rangle \, dk_1	\\
& = \frac{\langle v, v\rangle \overline{P}(e_n k^{-1})}{\dim \tau_{\chi_{\pi},c(\pi)}}.
\qedhere
\end{align*}
\end{proof}

An immediate consequence of the matrix coefficient identity \eqref{eqn:matrixcoeff} is the following.

\begin{corollary}
\label{cor:matrixcoeff}
Let $(\pi,V_{\pi})$ be an induced representation of Langlands type with newform $v^{\circ} \in V_{\pi}^{\tau_{\chi_{\pi},c(\pi)}}$. We have that
\begin{align*}
\frac{\left\langle \pi(k) \cdot v^{\circ}, v^{\circ}\right\rangle}{\langle v^{\circ}, v^{\circ}\rangle} & = P_{\chi_{\pi},c(\pi)}^{\circ}(e_n k)	\\
& = \begin{dcases*}
\chi_{\pi}(e_n k \prescript{t}{}{e}_n) & if $k \in K_0(\pp^{c(\pi)})$,	\\
\alpha_{\chi_{\pi},c(\pi);c(\pi) - 1} \chi_{\pi}(e_n k \prescript{t}{}{e}_n) & if $c(\pi) > c(\chi_{\pi})$ and $k \in K_0(\pp^{c(\pi) - 1}) \setminus K_0(\pp^{c(\pi)})$,	\\
0 & otherwise,
\end{dcases*}
\end{align*}
where $\alpha_{\chi_{\pi},c(\pi);c(\pi) - 1}$ is as in \eqref{eqn:alphachimm-1}.
\end{corollary}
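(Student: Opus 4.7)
The plan is to derive the identity as a direct specialisation of the matrix coefficient formula \eqref{eq:matrixcoeff} proven in the preceding proposition, with both $v = v^{\circ}$ and the corresponding $P$ taken to be the zonal spherical function $P_{\chi_{\pi},c(\pi)}^{\circ}$.

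First, I would verify that under the isomorphism \eqref{eq:vdef} between $\HH_{\chi_{\pi},c(\pi)}(S^{n-1})$ and $V_{\pi}^{\tau_{\chi_{\pi},c(\pi)}}$, the zonal spherical function $P_{\chi_{\pi},c(\pi)}^{\circ}$ is precisely the element corresponding to the newform $v^{\circ}$: indeed, substituting $P = P_{\chi_{\pi},c(\pi)}^{\circ}$ into \eqref{eq:vdef} gives exactly the right-hand side of the identity \eqref{eq:vcirc}, so $v_{P_{\chi_{\pi},c(\pi)}^{\circ}} = v^{\circ}$. (This is consistent with the fact that $\HH_{\chi_{\pi},c(\pi)}(S^{n-1})^{K_{n-1,1}}$ is one-dimensional and spanned by $P_{\chi_{\pi},c(\pi)}^{\circ}$, while its image under the isomorphism must be the one-dimensional space spanned by the newform, by \hyperref[thm:newform]{Theorem \ref*{thm:newform}}.)

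Next, I would apply \eqref{eq:matrixcoeff} with this choice, simplify the left-hand side using the norm identity \eqref{eq:PchimcircL2} which states $\langle P_{\chi_{\pi},c(\pi)}^{\circ}, P_{\chi_{\pi},c(\pi)}^{\circ}\rangle = 1/\dim \HH_{\chi_{\pi},c(\pi)}(S^{n-1}) = 1/\dim \tau_{\chi_{\pi},c(\pi)}$, and simplify the right-hand side using the symmetry identity \eqref{eq:k-1bar}, namely $\overline{P_{\chi_{\pi},c(\pi)}^{\circ}}(e_n k^{-1}) = P_{\chi_{\pi},c(\pi)}^{\circ}(e_n k)$. The factors of $\dim \tau_{\chi_{\pi},c(\pi)}$ cancel, leaving the first displayed equation of the corollary.

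Finally, the explicit piecewise description follows by substituting $x = e_n k$ into the formula \eqref{eq:Pchimcircdef} for $P_{\chi_{\pi},c(\pi)}^{\circ}$ from \hyperref[prop:Pcirc]{Proposition \ref*{prop:Pcirc}}, noting that for $k = \begin{psmallmatrix} a & b \\ c & d \end{psmallmatrix} \in K$ the coordinates of $e_n k$ are $(c,d)$, so the conditions $\max\{|x_1|,\ldots,|x_{n-1}|\} \leq q^{-c(\pi)}$ and $\max\{|x_1|,\ldots,|x_{n-1}|\} = q^{-c(\pi)+1}$ translate into the membership conditions $k \in K_0(\pp^{c(\pi)})$ and $k \in K_0(\pp^{c(\pi)-1}) \setminus K_0(\pp^{c(\pi)})$ respectively, while $x_n = e_n k \prescript{t}{}{e}_n$. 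There is no real obstacle here; the corollary is essentially a bookkeeping consequence of results already established, with the main content sitting in \eqref{eq:matrixcoeff} and \eqref{eq:k-1bar}.
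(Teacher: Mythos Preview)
Your proposal is correct and follows essentially the same approach as the paper, which simply says to take $v = v^{\circ}$ in \eqref{eq:matrixcoeff} and apply \eqref{eq:Pchimcircdef}, \eqref{eq:k-1bar}, and \eqref{eq:PchimcircL2}. Your write-up merely unpacks these references in more detail, including the verification via \eqref{eq:vcirc} that $P_{\chi_{\pi},c(\pi)}^{\circ}$ corresponds to $v^{\circ}$ and the translation of the piecewise conditions into membership in $K_0(\pp^{c(\pi)})$.
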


\begin{proof}
We take $v = v^{\circ}$ in \eqref{eqn:matrixcoeff} and apply \eqref{eqn:Pchimcircdef}, \eqref{eqn:k-1bar}, and \eqref{eqn:PchimcircL2}.
\end{proof}

\begin{remark}
From \cite[Lemma 3.3]{Tem14}, we have that $c(\pi) = c(\chi_{\pi})$ if and only if $\pi$ is a twist-minimal principal series representation, so that $\pi = \Ind_{\Pgp(F)}^{\GL_n(F)} \bigboxtimes_{j = 1}^{n} \pi_j$ with each $\pi_j$ a character of $F^{\times}$ and $c(\pi_j) = 0$ for all but at most one $j \in \{1,\ldots,n\}$.
\end{remark}

\begin{remark}
Suppose that $\pi$ is a unitary generic irreducible admissible smooth representation of $\GL_n(F)$. One may take as a model for $\pi$ the Whittaker model $\WW(\pi,\psi)$; the $K$-invariant inner product on the $\tau_{\chi_{\pi},c(\pi)}$-isotypic subspace of $\WW(\pi,\psi)$ is then given by
\[\langle W_1,W_2\rangle \coloneqq \int\limits_{\Ngp_{n - 1}(F) \backslash \GL_{n - 1}(F)} W_1\begin{pmatrix} g & 0 \\ 0 & 1 \end{pmatrix} \overline{W_2}\begin{pmatrix} g & 0 \\ 0 & 1 \end{pmatrix} \, dg.\]
(In fact, this is a $\GL_n(F)$-invariant inner product \cite{Ber84}.) The canonical normalisation of the newform $W^{\circ}$ in the Whittaker model is such that $W^{\circ}(1_n) = 1$. Venkatesh \cite[Section 7]{Ven06} has explicitly calculated $\langle W^{\circ}, W^{\circ}\rangle$ with respect to this inner product; when $\pi$ is spherical, so that $c(\pi) = 0$, this is simply $L(1,\ad \pi)$. Via \hyperref[cor:matrixcoeff]{Corollary \ref*{cor:matrixcoeff}}, this allows one to explicitly determine $\langle \pi(k) \cdot W^{\circ}, W^{\circ}\rangle$.
\end{remark}

\section{Archimedean Analogues}
\label{sect:archimedean}

\subsection{Archimedean Spherical Harmonics}

The archimedean analogues of the results in \hyperref[sect:spherical]{Sections \ref*{sect:spherical}} and \ref{sect:zonal} are well-known; we briefly survey them for the sake of comparison. In place of a nonarchimedean field $F$, we instead work with an archimedean field, which is either $\R$ or $\C$.

\subsubsection{$F = \R$}

The maximal compact subgroup $K_n$ of $\GL_n(\R)$ is the orthogonal group $\Ogp(n)$. This acts transitively on unit sphere
\[S^{n - 1} \coloneqq \left\{x = (x_1,\ldots,x_n) \in \R^n : x_1^2 + \cdots + x_n^2 = 1\right\}\]
in $\R^n$ via the group action $k \cdot x \coloneqq xk$ for $k \in K_n$ and $x \in S^{n - 1}$. The stabiliser subgroup of $K_n$ with respect to the point $e_n \coloneqq (0,\ldots,0,1) \in S^{n - 1}$ is
\begin{equation}
\label{eqn:archKn-1}
\left\{ \begin{pmatrix} k' & 0 \\ 0 & 1 \end{pmatrix} \in K_n : k' \in K_{n - 1}\right\},
\end{equation}
which we freely identify with $K_{n - 1}$. It follows that $S^{n - 1} \cong K_{n - 1} \backslash K_n$.

Note that the subgroup \eqref{eqn:archKn-1} does not, at first glance, appear to be the direct archimedean analogue of the subgroup $K_{n - 1,1}$ as in \eqref{eqn:Kn-11}, since $K_{n - 1,1} \cong K_{n - 1} \ltimes \OO^{n - 1}$. In the archimedean setting, on the other hand, there is no analogue of $\OO^{n - 1}$; nonetheless, the subgroup \eqref{eqn:archKn-1}, like the nonarchimedean subgroup \eqref{eqn:Kn-11}, is the maximal compact subgroup of the mirabolic subgroup \eqref{eqn:mirabolic}.

The decomposition of the right regular representation of $K_n$ on $C^{\infty}(S^{n - 1})$ is precisely the theory of spherical harmonics. This is best understood in terms of homogeneous harmonic polynomials, for which we follow \cite[Chapter 2]{AH12}.

Given a nonnegative integer $m$, let $\HH_m(\R^n)$ denote the vector space consisting of homogeneous harmonic polynomials of degree $m$, namely the set of polynomials $P(x_1,\ldots,x_n)$ in $(x_1,\ldots,x_n) \in \R^n$ that are annihilated by the Laplacian
\[\Delta = \sum_{j = 1}^{n} \frac{\dee^2}{\dee x_j^2}\]
and satisfy $P(\lambda x_1,\ldots,\lambda x_n) = \lambda^m P(x_1,\ldots,x_n)$ for all $\lambda \in \R$. This space has dimension $1$ for $n = 1$ and $m \in \{0,1\}$ and has dimension
\[\binom{m + n - 2}{n - 2} + \binom{m + n - 3}{n - 3} = \frac{(2m + n - 2) (m + n - 3)!}{m!(n - 2)!} = \frac{2m + n - 2}{m + n - 2} \binom{m + n - 2}{n - 2}\]
for $n \geq 2$.

Let $\HH_m(S^{n - 1})$ denote the vector space of the restriction of elements of $\HH_m(\R^n)$ to the unit sphere; via the homogeneity of elements of $\HH_m(\R^n)$, these spaces are isomorphic. Elements of $\HH_m(S^{n - 1})$ are called spherical harmonics of degree $m$. The group $K_n$ acts on $\HH_m(\R^n)$ via right translation, which descends to an action on $\HH_m(S^{n - 1})$. As a $K_n$-module, $\HH_m(S^{n - 1})$ is irreducible, and for $n \geq 2$, we have the orthogonal decomposition
\[C^{\infty}(S^{n - 1}) = \bigoplus_{m = 0}^{\infty} \HH_m(S^{n - 1}),\]
analogous to the decomposition \eqref{eqn:Cdecomp} of $C^{\infty}(S^{n - 1})$.

We may view $\HH_m(S^{n - 1})$ as the real analogue of $\HH_{\chi,m}(S^{n - 1})$. In turn, the real analogue of $C^{\infty}(S^{n - 1})^{K(\pp^m)}$ is $\bigoplus_{\ell = 0}^{m} \HH_{\ell}(S^{n - 1})$, the space of spherical harmonics of degree at most $m$. As $K_1 = \Ogp(1) \cong \Z/2\Z$, which is the real analogue of $\OO^{\times}$, characters of $K_1$ are of the form $\sgn^{\kappa}$ for $\kappa \in \{0,1\}$, where $\sgn(x) \coloneqq x/|x|$. In particular, the central character $\chi$ of the $K_n$-module $\HH_m(S^{n - 1})$ is simply $\sgn^{m \pmod{2}}$, which is determined by the parity of the nonnegative integer $m$. So the real analogue of $C^{\infty}(S^{n - 1})_{\chi}^{K(\pp^m)}$, in terms of its orthogonal decomposition \eqref{eqn:CchiKmdecomp}, is precisely
\[\bigoplus_{\substack{\ell = 0 \\ \ell \equiv \kappa \hspace{-.25cm} \pmod{2}}}^{m} \HH_{\ell}(S^{n - 1})\]
for $\chi = \sgn^{\kappa}$ with $\kappa \in \{0,1\}$ and $m \geq \kappa$, while the analogue the orthogonal decomposition \eqref{eqn:CKmdecomp} of $C^{\infty}(S^{n - 1})^{K(\pp^m)}$ is simply
\[\bigoplus_{\ell = 0}^{m} \HH_{\ell}(S^{n - 1}) = \bigoplus_{\kappa \in \{0,1\}} \bigoplus_{\substack{\ell = 0 \\ \ell \equiv \kappa \hspace{-.25cm} \pmod{2}}}^{m} \HH_{\ell}(S^{n - 1}).\]

There exists a unique spherical harmonic $P_m^{\circ} \in \HH_m(S^{n - 1})$ satisfying $P_m^{\circ}(e_n) = 1$ and $P_m^{\circ}(xk) = P_m^{\circ}(x)$ for all $x = (x_1,\ldots,x_n) \in S^{n - 1}$ and $k = \begin{psmallmatrix} k' & 0 \\ 0 & 1 \end{psmallmatrix} \in K_n$ with $k' \in K_{n - 1}$, namely
\[P_m^{\circ}(x_1,\ldots,x_n) \coloneqq \sum_{\substack{\nu = 0 \\ \nu \equiv 0 \hspace{-.25cm} \pmod{2}}}^{m} \frac{i^{\nu} m! \Gamma\left(\frac{n - 1}{2}\right)}{2^{\nu} \left(\frac{\nu}{2}\right)! (m - \nu)! \Gamma\left(\frac{\nu + n - 1}{2}\right)} \left(x_1^2 + \cdots + x_{n - 1}^2\right)^{\frac{\nu}{2}} x_n^{m - \nu}.\]
This is the zonal spherical harmonic on $S^{n - 1}$ of degree $m$, which is the real analogue of the zonal spherical function $P_{\chi,m}^{\circ} \in \HH_{\chi,m}(S^{n - 1})$. From this, the addition theorem for $\HH_m(S^{n - 1})$ takes precisely the same form as the nonarchimedean result given in \hyperref[lem:addthm]{Lemma \ref*{lem:addthm}}; similarly, $(\dim \HH_m(S^{n - 1})) P_m^{\circ}$ is the reproducing kernel for $\HH_m(S^{n - 1})$, akin to \hyperref[cor:reproducing]{Corollary \ref*{cor:reproducing}}.

\subsubsection{$F = \C$}

The maximal compact subgroup $K_n$ of $\GL_n(\C)$ is the unitary group $\Ugp(n)$. This acts transitively on unit sphere
\[S^{n - 1} \coloneqq \left\{z = (z_1,\ldots,z_n) \in \C^n : z_1 \overline{z_1} + \cdots + z_n \overline{z_n} = 1\right\}\]
in $\C^n$ via the group action $k \cdot x \coloneqq xk$ for $k \in K_n$ and $x \in S^{n - 1}$. (It behoves us to point out that as a real topological manifold, this should be viewed as the $(2n - 1)$-dimensional unit sphere, but we use the notation $S^{n - 1}$ for the sake of consistency.) Just as for the real case, the stabiliser subgroup of $K_n$ with respect to the point $e_n \coloneqq (0,\ldots,0,1) \in S^{n - 1}$ is the subgroup \eqref{eqn:archKn-1}, which we freely identify with $K_{n - 1}$, so that $S^{n - 1} \cong K_{n - 1} \backslash K_n$.

The decomposition of the right regular representation of $K_n$ on $C^{\infty}(S^{n - 1})$ is again the theory of spherical harmonics, with the additional complexification that one must consider the \emph{bidegree} of such a spherical harmonic. This is best understood in terms of homogeneous harmonic polynomials, for which we follow \cite[Chapter 12]{Rud08}.

Given a pair of nonnegative integers $m_1,m_2$, let $\HH_{m_1,m_2}(\C^n)$ denote the vector space consisting of homogeneous harmonic polynomials of bidegree $(m_1,m_2)$, namely the set of polynomials $P(z_1,\ldots,z_n,\overline{z_1},\ldots,\overline{z_n})$ in $(z_1,\ldots,z_n) \in \C^n$ that are annihilated by the Laplacian
\[\Delta = 4\sum_{j = 1}^{n} \frac{\dee^2}{\dee z_j \dee \overline{z_j}}\]
and satisfy $P(\lambda z_1,\ldots,\lambda z_n, \overline{\lambda z_1}, \ldots, \overline{\lambda z_n}) = \lambda^{m_1} \overline{\lambda}^{m_2} P(z_1,\ldots,z_n,\overline{z_1},\ldots,\overline{z_n})$ for all $\lambda \in \C$. This space has dimension $1$ for $n = 1$ and dimension
\[\frac{(m_1 + m_2 + n - 1) (m_1 + n - 2)! (m_2 + n - 2)!}{m_1! m_2! (n - 2)! (n - 1)!} = \frac{m_1 + m_2 + n - 1}{n - 1} \binom{m_1 + n - 2}{n - 2} \binom{m_2 + n - 2}{n - 2}\]
for $n \geq 2$.

Let $\HH_{m_1,m_2}(S^{n - 1})$ denote the vector space of the restriction of elements of $\HH_{m_1,m_2}(\C^n)$ to the unit sphere; via the homogeneity of elements of $\HH_{m_1,m_2}(\C^n)$, these spaces are isomorphic. Elements of $\HH_{m_1,m_2}(S^{n - 1})$ are called spherical harmonics of bidegree $(m_1,m_2)$. The group $K_n$ acts on $\HH_{m_1,m_2}(\C^n)$ via right translation, which descends to an action on $\HH_{m_1,m_2}(S^{n - 1})$. As a $K_n$-module, $\HH_{m_1,m_2}(S^{n - 1})$ is irreducible, and for $n \geq 2$, we have the orthogonal decomposition
\[C^{\infty}(S^{n - 1}) = \bigoplus_{m_1,m_2 = 0}^{\infty} \HH_{m_1,m_2}(S^{n - 1}),\]
analogous to the decomposition \eqref{eqn:Cdecomp} of $C^{\infty}(S^{n - 1})$.

We may view $\HH_{m_1,m_2}(S^{n - 1})$ as the complex analogue of $\HH_{\chi,m}(S^{n - 1})$. In turn, the complex analogue of $C^{\infty}(S^{n - 1})^{K(\pp^m)}$ is $\bigoplus_{m_1 + m_2 \leq m} \HH_{m_1,m_2}(S^{n - 1})$, the space of spherical harmonics of total degree at most $m$. As $K_1 = \Ugp(1) \cong \R/\Z$, which is the complex analogue of $\OO^{\times}$, characters of $K_1$ are of the form $e^{i\kappa\arg}$ for $\kappa \in \Z$, where $e^{i\arg(z)} \coloneqq z^{1/2} \overline{z}^{-1/2}$. In particular, the central character $\chi$ of the $K_n$-module $\HH_{m_1,m_2}(S^{n - 1})$ is $e^{i(m_1 - m_2)\arg}$, which is determined by the difference of $m_1$ and $m_2$. So the complex analogue of $C^{\infty}(S^{n - 1})_{\chi}^{K(\pp^m)}$, in terms of its orthogonal decomposition \eqref{eqn:CchiKmdecomp}, is precisely
\[\bigoplus_{\substack{m_1 + m_2 \leq m \\ m_1 - m_2 = \ell}} \HH_{m_1,m_2}(S^{n - 1})\]
for $\chi = e^{i\ell\arg}$ with $m \geq |\ell|$, while the orthogonal decomposition \eqref{eqn:CKmdecomp} of $C^{\infty}(S^{n - 1})^{K(\pp^m)}$ has the complex analogue
\[\bigoplus_{m_1 + m_2 \leq m} \HH_{m_1,m_2}(S^{n - 1}) = \bigoplus_{|\ell| \leq m} \bigoplus_{\substack{m_1 + m_2 \leq m \\ m_1 - m_2 = \ell}} \HH_{m_1,m_2}(S^{n - 1}).\]

There exists a unique spherical harmonic $P_m^{\circ} \in \HH_{m_1,m_2}(S^{n - 1})$ satisfying $P_{m_1,m_2}^{\circ}(e_n,e_n) = 1$ and $P_{m_1,m_2}^{\circ}(zk,\overline{zk}) = P_{m_1,m_2}^{\circ}(z,\overline{z})$ for all $z = (z_1,\ldots,z_n) \in S^{n - 1}$ and $k = \begin{psmallmatrix} k' & 0 \\ 0 & 1 \end{psmallmatrix} \in K_n$ with $k' \in K_{n - 1}$, namely
\[P_{m_1,m_2}^{\circ}(z,\overline{z}) \coloneqq \sum_{\nu = 0}^{\min\{m_1,m_2\}} \frac{(-1)^{\nu} \binom{m_1}{\nu} \binom{m_2}{\nu}}{\binom{\nu + n - 2}{n - 2}} \left(z_1 \overline{z_1} + \cdots + z_{n - 1} \overline{z_{n - 1}}\right)^{\nu} z_n^{m_1 - \nu} \overline{z_n}^{m_2 - \nu}.\]
This is the zonal spherical harmonic on $S^{n - 1}$ of bidegree $(m_1,m_2)$, which is the complex analogue of the zonal spherical function $P_{\chi,m}^{\circ} \in \HH_{\chi,m}(S^{n - 1})$. Once more, the addition theorem for $\HH_{m_1,m_2}(S^{n - 1})$ takes the same form as \hyperref[lem:addthm]{Lemma \ref*{lem:addthm}} and $(\dim \HH_m(S^{n - 1})) P_m^{\circ}$ is the reproducing kernel for $\HH_m(S^{n - 1})$.

\subsection{Archimedean Newform Theory for \texorpdfstring{$\GL_n$}{GL\9040\231}}

Finally, we consider the archimedean analogues of the results in \hyperref[sect:Ktype]{Section \ref*{sect:Ktype}}; these are due to Popa \cite{Pop08} for $\GL_2$ and to the author \cite{Hum20} for $\GL_n$ with $n$ arbitrary.

Let $(\pi,V_{\pi})$ be an induced representation of Langlands type of $\GL_n(F)$, where $F$ is an archimedean local field, so that $F$ is either $\R$ or $\C$. There is no obvious analogue in the archimedean setting of the nonarchimedean congruence subgroups $K_1(\pp^m)$ and $K_0(\pp^m)$ of $K_n$ (though cf.~\cite{JN19}). This prevents one from defining the newform and conductor exponent of $\pi$ via the subspace $V_{\pi}^{K_1(\pp^{c(\pi)})}$ of $K_1(\pp^{c(\pi)})$-invariant vectors as in \hyperref[def:newform]{Definition \ref*{def:newform}}.

As highlighted in \hyperref[rem:othercharacterisation]{Remark \ref*{rem:othercharacterisation}}, one can instead characterise the nonarchimedean newform, when viewed in the Whittaker model, as the unique Whittaker function $W^{\circ} \in \WW(\pi,\psi)$ that is both right $K_{n - 1}$-invariant, with $K_{n - 1}$ embedded in $K_n$ as the subgroup \eqref{eqn:archKn-1}, and is a test vector for the local $\GL_n \times \GL_{n - 1}$ Rankin--Selberg integral whenever the second representation is unramified. It is shown in \cite[Theorem 4.17]{Hum20} that this characterisation does indeed also hold in the archimedean setting, in that there is a unique such Whittaker function satisfying these two conditions. One would also like to show that the conductor exponent is characterised via the epsilon factor $\e(s,\pi,\psi)$; while this is the case when $F$ is nonarchimedean, this is unfortunately insufficient when $F$ is archimedean, for then $\e(s,\pi,\psi)$ is simply an integral power of $i$, and this integer is only determined modulo $4$.

Nonetheless, the archimedean analogue of \hyperref[thm:newform]{Theorem \ref*{thm:newform}} holds; more precisely, the archimedean analogues of \eqref{eqn:dimVKn-11} and \eqref{eqn:VKn-11} are true via \cite[Theorems 4.7 and 4.12]{Hum20}. Here the nonarchimedean subgroup $K_{n - 1,1}$ of $K_n$ is replaced by the archimedean subgroup \eqref{eqn:archKn-1}, which both share the property that they are the stabiliser subgroup of $K_n$ with respect to the point $e_n \in F^n$; moreover, the notion of ordering $K_n$-types by their \emph{level} in the nonarchimedean setting is replaced by ordering $K_n$-types by their \emph{Howe degree} in the archimedean setting (cf.~\cite[Section 4.2]{Hum20}).

In \cite[Definition 4.8]{Hum20}, the author has \emph{defined} the conductor exponent $c(\pi)$ of an induced representation of Langlands type $(\pi,V_{\pi})$ of $\GL_n$ over an archimedean field $F$ to be the minimal nonnegative integer $m$ for which $\pi$ contains a $K_n$-type $\tau$ of Howe degree $m$ having a nontrivial $K_{n - 1}$-fixed vector, with $K_{n - 1}$ embedded in $K_n$ as the subgroup \eqref{eqn:archKn-1}. This distinguished $K_n$-type $\tau = \tau^{\circ}$ is the newform $K_n$-type and appears with multiplicity one in $\pi$. The author has also \emph{defined} the newform in the archimedean setting to be the nonzero vector in $V_{\pi}$, unique up to scalar multiplication, that is invariant under the subgroup \eqref{eqn:archKn-1} and is $\tau^{\circ}$-isotypic.

We observed in \hyperref[rem:newformdef]{Remark \ref*{rem:newformdef}} that \hyperref[thm:newform]{Theorem \ref*{thm:newform}} gives alternative characterisations of the newform and conductor exponent in the nonarchimedean setting in terms of $K_n$-types. In conjunction with \cite[Theorem 4.7]{Hum20}, this characterisation thereby unifies the nonarchimedean and archimedean treatments of the newform and the conductor exponent.

\phantomsection
\addcontentsline{toc}{section}{Acknowledgements}
\hypersetup{bookmarksdepth=-1}

\subsection*{Acknowledgements}

Thanks are owed to Subhajit Jana for helpful discussions regarding newforms and matrix coefficients.

\hypersetup{bookmarksdepth}

\end{document}